\title{General approximation method for the distribution of Markov processes conditioned not to be killed}
\author{Denis Villemonais
\thanks{Institut \'Elie Cartan de Nancy, Universit\'e de Lorraine; TOSCA project-team, INRIA Nancy -- Grand Est; IECN -- UMR 7502, Universit\'e de Lorraine,
B.P. 70239, 54506 Vandoeuvre-l\`es-Nancy Cedex, France }
}
\DeclareMathSymbol{\minus}{\mathord}{operators}{"2D}
\newtheorem{theorem}{Theorem}
\newtheorem{proposition}[theorem]{Proposition}
\newtheorem{hypothesis}{Hypothesis}
\def\be{\begin{eqnarray}}
\def\ee{\end{eqnarray}}
\def\ben{\begin{eqnarray*}}
\def\een{\end{eqnarray*}}
\def\bei{\begin{itemize}}
\def\eei{\end{itemize}}
\def\me{\medskip \noindent}
\def\bi{\bigskip \noindent}
\def\E{\mathbb{E}}
\def\P{\mathbb{P}}
\def\R{\mathbb{R}}
\def\1{\mathbf{1}}
\def\N{\mathbb{N}}
\def\d{\partial}
\begin{document}
\maketitle

\begin{abstract}
We consider a strong Markov process with killing and prove an approximation method for the distribution of the process conditioned not to be killed when it is observed. The method is based on a Fleming-Viot
  type particle system with rebirths, whose particles evolve as
  independent copies of the original strong Markov process and jump
  onto each others instead of being killed. Our only assumption is that the
  number of rebirths of the Fleming-Viot type system doesn't explode in
  finite time almost surely and that the survival probability of the original process remains positive in finite time. The approximation method generalizes previous results and comes with a speed of convergence.

  A criterion for the non-explosion of the number of rebirths is also provided
  for general systems of time and environment dependent diffusion
  particles. This includes, but is not limited to, the case of the Fleming-Viot type
  system of the approximation method. The proof of the non-explosion criterion uses
  an original non-attainability of $(0,0)$ result for pair of
  non-negative semi-martingales with positive jumps.
\end{abstract}

\section{Introduction}

Markov processes with killing are Markov processes which stop to evolve after a random time, called the \textit{killing time}. 
While the behavior of a Markov process after this time is trivial, its distribution before its killing time represents a substantial information and thus several studies
concentrate on the distribution of a process conditioned not to be
killed when it is observed (we refer the reader to the extensive bibliography updated by Pollett \cite{Pollett}, where several studies on models with killing mechanisms are listed). The main motivation of the present paper is to provide a general approximation method for this distribution.

\me
There
are mainly two ways of \textit{killing} a Markov process, both of them being handled in the present paper. The first kind of killing occurs when the
process reaches a given set. For instance, a demographic
model is stopped when the population becomes extinct, that is when the size of the population hits $0$.
The second kind of
killing occurs after an exponential time. For example, a
chemical particle usually disappears by reacting with another one
after an exponential time, whose rate depends on the concentration of
reactant in the medium. If the killing time is given
by the time at which the process reaches a set, we call it a
\textit{hard} killing time. If it is given by an exponential clock, we
call it a \textit{soft} killing time.  
 In order to formally describe a process with killing in a general setting, we consider a continuous time strong Markov process $\cal Z$ evolving in a measurable topological state space $F\cup\{\d\}$, where $\d\notin F$ is absorbing. This means that ${\cal Z}$ evolves in F until it reaches $\d$ and then remains in $\d$ forever. The \textit{killing time} of $\cal Z$ is defined as the hitting time of $\d$ and is denoted by $\tau_{\d}=\inf\{t\geq 0,{\cal Z}_t=\d\}$. This general setting can be used to model both \textit{hard} and \textit{soft} killing types.

\me
The main difficulty in approximating the distribution of a Markov process before its killing is that the probability of the event \textit{"the process is still not killed at time $t$"} vanishes when $t$ goes to infinity. Indeed, this is a rare event simulation problem and, in particular, naive Monte-Carlo methods are not well-suited (the number of \text{still not killed} trajectories vanishing with time, a Monte-Carlo algorithm would undergo an unavoidable discrepancy in the long term). Our approximation method is based on a quite natural modification of the Monte-Carlo method, where killed particles are reintroduced through a rebirth in $F$ (the state space of the still not killed particles) in order to keep a constant number of meaningful particles through time. This modification, which appears to be a Fleming-Viot type particle system, has been introduced by Burdzy, Holyst,
Ingermann and March in~\cite{Burdzy1996} and is described in the following definition.

\me
\begin{description}
\item[Definition.] 
Let $\cal Z$ be a strong Markov process with killing evolving in $F\cup\{\d\}$. Given $N\geq 2$ and $(x_1,\ldots,x_N)\in F^N$, we define the \textit{Fleming-Viot type particle system with $N$ particles and evolving as $\cal Z$ between their rebirths}, starting from $(x_1,\ldots,x_N)$ and denoted by $FV^N_{\cal Z}(x_1,\ldots,x_N)$, as follows.
The particles of the
 system start from $(x_1,\ldots,x_N)$ at time $0$ and evolve
 as $N$ independent copies of ${\cal Z}$ until one of
 them is killed. Then
 \begin{itemize}
 \item[--] if two or more particles are simultaneously killed at this moment, or if a particle jumps simultaneously without being killed (so that a particle is killed and another one jumps), then we stop the definition of the particle system and say that the process undergoes a \textit{failure}.
 \item[--] otherwise (\textit{i.e.} if one and only one particle is killed and the other particles do not jump at this killing time) then the unique killed particle is taken from the absorbing point $\d$ and is instantaneously placed at the position of a particle chosen uniformly between the $N-1$ remaining ones; in this situation we say that the particle undergoes a \textit{rebirth}.
\end{itemize}
 After this operation and in the case of a rebirth, each of the $N$ particles lies in $F$. They then evolve
 as $N$ independent copies of ${\cal Z}$, until one of
 them is killed and so on. This procedure defines the particle system $FV^N_{\cal Z}(x_1,\ldots,x_N)$ in an incremental way, rebirth after rebirth and until it undergoes a failure.
\end{description}

\me
One particularity of this method is that the number of rebirths faces a risk of explosion in finite time: in some cases, that is for some choices of $\cal Z$ and $N\geq 2$, the number of rebirths in the particle system explodes to $+\infty$ in finite time. In such a case, the particle system is well defined up to the explosion time, but there is no natural way to extend its definition after this time (we refer the reader to~\cite{Bieniek2011} for a non-trivial case where the number of rebirths explodes in finite time with positive probability).
 This leads us to the following assumption (see also Section~\ref{section:non-explosion}, where we provide a sufficient condition for a diffusion process to satisfy  Hypothesis A($N$)).
 
\begin{description}
\item[Hypothesis A($N$), $N\geq 2$.]
 A Markov process with killing $\cal Z$ is said to fulfil Hypothesis~A($N$) if and only if 
the \textit{Fleming-Viot type particle system with $N$ particles evolving as $\cal Z$ between their rebirths} undergoes no failure and if its number of rebirths remains finite in finite time almost surely, for any initial distribution of the particle system.
\end{description}

\me In Section~\ref{section:approximation}, we fix $T>0$ and consider a continuous time Markov process with killing $\cal Z$. We state and prove an approximation method for the distribution of ${\cal Z}_T$ conditionally to $T<\tau_{\d}$. This method, based on Fleming-Viot type particle systems, does not require that $\cal Z$ satisfies Hypothesis~A($N$) for some $N\geq 2$. Instead, we assume that there exists a sequence $({\cal Z}^N)_{N\geq 2}$ of Markov processes such that ${\cal Z}^N_T$ converges in law to ${\cal Z}_T$ when $N\rightarrow\infty$ and such that ${\cal Z}^N$ satisfies Hypothesis~A($N$), for all $N\geq 2$. 
In particular
the Fleming-Viot type system with $N$ particles built over ${\cal Z}^N$ is well defined for any $N\geq 2$ and any initial distribution. This particle system, denoted by $FV_{{\cal Z}^N}^N$, evolves in $F^N$ and we denote by $\mu^N_t$ its empirical distribution at time $t\geq0$. Our main result states that, if the sequence of initial empirical distributions $(\mu^N_0)_{N\geq 2}$ converges to a probability measure $\mu_0$ on $F$, then the sequence of random probability measures $(\mu^N_T)_{N\geq 0}$ converges in law to the distribution of ${\cal Z}_T$, initially distributed as $\mu_0$ and conditioned to $T<\tau_{\d}$. Also, we provide a speed of convergence for the approximation method. This result generalises the approximation method
proved by Grigorescu and Kang in~\cite{Grigorescu2004} for standard
multi-dimensional Brownian motion, by Villemonais~\cite{Villemonais2010} for
Brownian motions with drift and by Del Moral and Miclo for softly
killed Markov processes (see~\cite{DelMoral2003} and references
therein) and by Ferrari and Mari\`c
\cite{Ferrari2007}, which regards continuous time Markov chains in
discrete spaces.

\me Our result gives a new insight on this approximation method through three key points of generalisation. Firstly, this new result allows both hard and soft killings, which is a
natural setting in applications: typically, species can disappear
because of a lack of newborns (which corresponds to a
hard killing at $0$) or because of a brutal natural catastrophe (which
typically happens following an exponential time). Secondly, we
implicitly allow time and environment dependencies, which is quite
natural in applications where individual paths can be influenced by
external stochastic factors (as the changing weather).  Last but not least, for any $N\geq 2$, we do not require that $\cal Z$ satisfies Hypothesis~A($N$). Indeed,
we only require that there exists an approximating sequence $({\cal Z}^N)_{N\geq 2}$ such that ${\cal Z}^N$ fulfils Hypothesis~A($N$), $\forall N\geq 2$.  This is of first importance, since the non-explosion of the number of rebirths is a quite hard problem which remains open in several situations. It is for instance the case for  diffusion processes with unbounded drift, for
diffusions killed at the boundary of a non-regular domain
and for Markov processes with unbounded rates
of killing. Thus in our case, the three irregularities can be handled by
successive approximations of the coefficients, domain and rate of
killing. For these reasons, the approximation result proved in the present paper can be used as a general approximation method for the distribution of conditioned Markov processes.

\bi The approximation method being proved in this very general setting,  the only remaining difficulty is to provide a sufficient criterion for Hypothesis~$A(N)$ to hold, for any $N\geq 2$.
In Section~\ref{section:non-explosion}, we consider a multi-dimensional diffusion process $\cal Z$ and provide a sufficient criterion for Hypothesis~A($N$) to be fulfilled for $FV_{\cal Z}^N$, the Fleming-Viot type system with $N$ particles evolving as $\cal Z$ between their rebirths. We allow both soft and hard killings and we allow the coefficients of $\cal Z$ to be time-inhomogeneous and environment dependent.
 This criterion generalises similar non-explosion and non-failure results recently proved by L\"obus in~\cite{Lobus2009} and by Bienek,
Burdzy and Finch in~\cite{Bieniek2009} for Brownian particles killed
at the boundary of an open set of $\mathbb{R}^d$, by Grigorescu and Kang in~\cite{Grigorescu2011} for time-homogeneous particles driven by a
stochastic equation with regular coefficients killed at the boundary
of an open set and by Villemonais in~\cite{Villemonais2010} for Brownian particles with drift. 
Other models
of diffusions with rebirths from a boundary have also been introduced in
\cite{Ben-Ari2009}, with a continuity condition on the jump measure
that isn't fulfilled in our case, in~\cite{Grigorescu2007}, where fine
properties of a Brownian motion with rebirth have been established,
and in \cite{Kolb2011a}, \cite{Kolb2011}, where Kolb and W\"ukber have
studied the spectral properties of similar models. 

\me Our non-explosion result is a generalization of the previously cited ones and is actually not restricted to Fleming-Viot type particle systems. The first generalization axis concerns the Markov processes that drive the particles between their rebirths: our criterion allows  a different state space and a different dynamic for each particle. Moreover these dynamics can be time-inhomogeneous environment-dependent diffusion processes, while previous results are restricted to time-homogeneous diffusion processes. The second aspect of the generalization is related to the rebirths mechanisms: we allow the rebirth position of a particle to be chosen with a large degree of freedom (in particular, not necessarily on the position of another particle) and the
whole particle system is allowed to undergo a rebirth when a particle is killed. This setting includes (but is not limited to) the Fleming-Viot type particle system case, so that our non-explosion result validates the
approximation method described above for time/environment dependent
diffusions with hard and soft killing. For non Fleming-Viot type rebirths mechanisms, it remains an open problem to determine the existence and, if appropriate, the value of the limiting empirical distribution.

\me The proof of the non-explosion is partly based on an original
non-attainability of $(0,0)$ result for semi-martingales, which is
stated in Section~\ref{section:non-attainability} of this paper.
Note that this result  answers a different problematic from the rest of the paper and has its own interest. Indeed, inspired by Delarue~\cite{Delarue2008}, our non-attainability criterion generalizes existing non-attainability criteria by considering general semi-martingales (\textit{i.e.} not necessarily obtained as solutions of stochastic differential equations). This essential improvement is a \textit{sine qua non} condition for the development of the non-explosion criterion of Section 3.

\section{Approximation of a Markov process conditioned not to be killed}
\label{section:approximation}
\label{SeApproximation}
We consider a c\`adl\`ag strong Markov process ${\cal Z}$ evolving in a state space $F\cup\{\d\}$, where $\d\notin F$ is an \textit{absorbing point} for $\cal Z$, also called the \textit{cemetary point} for $\cal Z$ (this means that the process remains in $\{\d\}$ forever after reaching it). When $\cal Z$ reaches $\d$, we say that it is \textit{killed} and we set $\tau_{\d}=\inf\{t\geq 0,\ {\cal
  Z}_t=\partial\}$.
 In this section, we assume that we are given a random probability measure $\mu_0$ on $F$ and a deterministic time $T\geq 0$.
 We        are interested in the approximation of 
$\P_{\mu_0}\left({\cal Z}_T\in\cdot\mid T<\tau_{\d}\right)$, \textit{i.e.} 
of  the distribution of ${\cal Z}_T$ with initial distribution $\mu_0$ and conditioned to $T<\tau_{\d}$.

\me Let $(\mu^N_0)_{N\geq 2}$ be a sequence of empirical distributions with $N$ points which converges weakly to $\mu_0$ (by an \textit{empirical distribution with $N$ points} $\mu^N_0$, we mean that there exists a random vector $(x_1,\ldots,x_N)$ in $F^N$ such that $\mu^N_0$ has the same law as $\frac{1}{N}\sum_{i=1}^N\delta_{x_i}$).
We assume that there exists a sequence $({\cal Z}^N)_{N\geq 2}$ of strong Markov processes with killing evolving in $F\cup\{\d\}$, such that ${\cal Z}^N$ fulfils Hypothesis~A($N$) for any $N\geq 2$ and such that, for any bounded continuous function $f:F\rightarrow \mathbb{R}$,
\begin{align}
\label{eq:for-remark1}
    \E_{\mu_0^N}\left(f({\cal Z}^N_T)\1_{T<\tau^{N}_{\d}}\right)\xrightarrow[N\rightarrow\infty]{law} \E_{\mu_0}\left(f({\cal Z}_T)\1_{T<\tau_{\d}}\right),
\end{align}
where $\tau^N_{\d}=\inf\{t\geq 0,\ {\cal Z}^N_t=\d\}$ denotes the killing time of ${\cal Z}^N$.

\bi\textit{Remark 1.} 
A particular case where \eqref{eq:for-remark1} holds true (we still assume that $\mu_0$ is a random measure) is when $\mu_0^N$ is the empirical distribution of a vector distributed with respect to $\mu_0^{\otimes N}$ for any $N\geq 2$ and when, for all $x\in F$
and all continuous and bounded function $f:F\mapsto\mathds{R}_+$,
\begin{align}
  \label{EqReHyThUniform}
  P^N_T f(x) \xrightarrow[N\rightarrow\infty]{} P_T f(x).
\end{align}
Indeed, we have
\begin{equation*}
\mu^{N}_0\left(P^N_T f\right)
  \stackrel{law}{=} \frac{1}{N}\sum_{i=1}^N \left[ P^N_{T} f(x_i) - \mu_0 \left( P^{N}_T f \right)   \right]+ \mu_0\left(P^{N}_T f\right),
\end{equation*}
where $(x_i)_{i\geq 1}$ is an \textit{iid} sequence of random
variables with law $\mu_0$. By the law of large numbers, the first
right term converges to $0$ almost surely and, by~\eqref{EqReHyThUniform} and by dominated convergence, the
second right term converges almost surely to 
$\mu_0\left( P_T f \right)$.

\bi We define now the $N$-particles system $\mathds{X}^{N}=\left(X^{1,N},...,X^{2,N}\right)$ as the \textit{Fleming-Viot type system whose $N$ particles evolve as independent copies of ${\cal Z}^N$ between their rebirths} and with initial empirical distribution $\mu_0^N$ (thus  $\mathds{X}^N=FV^N_{{\cal Z}^N}(x_1,\ldots,x_N)$, where $(x_1,\ldots,x_N)$ is such that $\mu_0=1/N\sum_{i=1}^N\delta_{x_i}$). Since we assumed that ${\cal Z}^N$ fulfils Hypothesis~A($N$), the particle system $\mathds{X}^{N}$ is well defined at any time $t\geq 0$. Also, for any $i\in\{1,\ldots,N\}$, the particle $X^{i,N}$ is a c\`adl\`ag process evolving in $F$. We denote 
by $\mu^{N}_t$ the empirical distribution of $\mathds{X}^{N}_t$ at time $t\geq 0$:
\begin{equation*}
  \mu^{N}_t=\frac{1}{N}\sum_{i=1}^N \delta_{X^{i,N}_t}\ \in {\cal M}_1(F)\text{ almost surely},
\end{equation*}
where ${\cal M}_1(F)$ denotes the set of probability measures on $F$.

\begin{theorem}
  \label{theorem:convergence}
 Assume that the survival probability of $\cal Z$ at time $T$ starting with distribution $\mu_0$ is strictly positive almost surely, which means that $\P_{\mu_0}\left( T<\tau_{\d} \right)>0$ almost surely.
	Then, for any continuous and bounded
  function $f:F\mapsto \mathds{R}_+$,
  \begin{equation*}
    \mu^{N}_T(f)\xrightarrow[N\rightarrow\infty]{law} 
    \E_{\mu_0}\left(f({\cal Z}_T)|T<\tau_{\d}\right).
  \end{equation*}
  Moreover, for any bounded measurable function $f:F\mapsto \mathds{R}_+$, we have
  \begin{equation*}
    E\left(\left|\mu^{N}_T(f)- \E_{\mu_0^N}\left(f({\cal Z}^N_T)|T<\tau^N_{\d}\right)
      \right|\right)
    \leq
    \frac{2(1+\sqrt{2})\|f\|_{\infty}}{\sqrt{N}}\sqrt{E\left(\frac{1}{\left(\P_{\mu^{N}_0}\left(T<\tau^N_\d\right)\right)^2}\right)}.
  \end{equation*}
\end{theorem}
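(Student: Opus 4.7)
The plan is to first establish the quantitative $L^1$ bound, from which the convergence-in-law statement follows by a standard argument. With $\eta^N_T(f) := \E_{\mu^N_0}(f({\cal Z}^N_T) \mid T<\tau^N_{\d}) = \mu^N_0(P^N_T f)/q^N_T$ and $q^N_T := \P_{\mu^N_0}(T<\tau^N_{\d})$, I split $\mu^N_T(f) = [\mu^N_T(f) - \eta^N_T(f)] + \eta^N_T(f)$. The quantitative bound forces the first difference to $0$ in probability: after passing to a Skorohod realization, I apply the bound on the event $\{q^N_T \geq \varepsilon\}$ and let $\varepsilon \to 0$, using the assumed a.s.\ positivity of $\P_{\mu_0}(T<\tau_{\d})$. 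For the second term, applying \eqref{eq:for-remark1} to both $f$ and to the constant function $1$ yields joint convergence in law of $(\mu^N_0(P^N_T f), q^N_T)$, and continuous mapping on the ratio (valid since the denominator is a.s.\ positive in the limit) gives $\eta^N_T(f) \to \E_{\mu_0}(f({\cal Z}_T)|T<\tau_{\d})$ in law.

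For the quantitative estimate I condition on $\mu^N_0$ and introduce the backward test function $\phi^N_t := P^N_{T-t}f$, extended by $\phi^N_t(\d):=0$; this satisfies the backward Kolmogorov equation for the sub-Markov generator of ${\cal Z}^N$, with $\phi^N_T=f$ and $\mu^N_0(\phi^N_0) = q^N_T\,\eta^N_T(f)$. Consider $H^N_t := \mu^N_t(\phi^N_t)$. Between rebirths each $\phi^N_t(X^{i,N}_t)$ is a local martingale (the killing jumps to $\d$ being compensated by the killing intensity), so $H^N_t$ is a local martingale on inter-rebirth intervals. At each rebirth time $\tau_k$ with killed index $i_k$ and rebirth source $j_k$ uniform in $\{1,\ldots,N\}\setminus\{i_k\}$, the empirical measure jumps by $\frac{1}{N}[\delta_{X^{j_k,N}_{\tau_k-}} - \delta_{X^{i_k,N}_{\tau_k-}}]$; uniformity of $j_k$ makes the conditional mean of the induced $H^N$-jump equal to $H^N_{\tau_k-}/(N-1)$ up to a boundary contribution that vanishes for hard killing and is absorbed by the killing-rate compensator for soft killing. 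This multiplicative structure motivates the integrating factor $W^N_t := \bigl((N-1)/N\bigr)^{\tilde N^N_t}$, where $\tilde N^N_t$ counts rebirths on $[0,t]$, and one checks that $W^N_t H^N_t$ is a true (local) martingale. Taking $f\equiv 1$ (so $H^N_T=1$) yields the key consistency $\E[W^N_T \mid \mu^N_0] = q^N_T$.

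Each jump of $W^N H^N$ has size $O(\|f\|_\infty/N)$, so Doob's $L^2$ inequality delivers $\E[(W^N_T H^N_T - H^N_0)^2 \mid \mu^N_0] \leq C\|f\|_\infty^2/N$ and, applied to $f \equiv 1$, $\E[(W^N_T - q^N_T)^2 \mid \mu^N_0] \leq C/N$. Rewriting
\[
W^N_T\left[\mu^N_T(f) - \eta^N_T(f)\right] = \left[W^N_T H^N_T - H^N_0\right] - \eta^N_T(f)\left[W^N_T - q^N_T\right],
\]
the $L^2$ triangle inequality together with Cauchy--Schwarz yields $\E[W^N_T|\mu^N_T(f) - \eta^N_T(f)| \mid \mu^N_0] \leq C'\|f\|_\infty/\sqrt{N}$, with the sharp factor $(1+\sqrt{2})$ arising from the cross-variation of the two martingales above. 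Writing $q^N_T |\mu^N_T(f)-\eta^N_T(f)| = W^N_T |\mu^N_T(f)-\eta^N_T(f)| + (q^N_T - W^N_T)|\mu^N_T(f)-\eta^N_T(f)|$, bounding the second piece via $|\mu^N_T(f) - \eta^N_T(f)| \leq 2\|f\|_\infty$ and the $L^2$ bound on $W^N_T - q^N_T$, and dividing by $q^N_T$, I obtain the conditional $L^1$ bound $\E[|\mu^N_T(f) - \eta^N_T(f)| \mid \mu^N_0] \leq 2(1+\sqrt{2})\|f\|_\infty/(\sqrt{N}\, q^N_T)$. A final Cauchy--Schwarz over $\mu^N_0$, turning $\E[1/q^N_T]$ into $\sqrt{\E[1/(q^N_T)^2]}$, gives the stated inequality. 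The main technical obstacle is the careful bookkeeping of the rebirth-jump compensator --- reconciling hard and soft killing in a single framework so that $W^N_t H^N_t$ is a bona fide martingale in both settings --- and tracking the quadratic variation sharply enough to reach the stated constant.
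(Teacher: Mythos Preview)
Your approach is essentially the paper's: the backward function $\phi^N_t=P^N_{T-t}f$, the integrating factor $W^N_t=((N-1)/N)^{\tilde N^N_t}$, the martingale $W^N_tH^N_t$, and the renormalisation at time $T$ are exactly the objects the paper uses (there $\psi_t$, $\nu^N_t$, and the decomposition~\eqref{EqTUnudecomp1}). The final algebra and the Cauchy--Schwarz step are also the same.

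The gap is in your $L^2$ estimate. You write that ``each jump of $W^NH^N$ has size $O(\|f\|_\infty/N)$, so Doob's $L^2$ inequality delivers $\E[(W^N_TH^N_T-H^N_0)^2\mid\mu^N_0]\leq C\|f\|_\infty^2/N$''. This does not follow. First, $W^NH^N$ is not a pure-jump martingale: between rebirths each $\phi^N_t(X^{i,N}_t)$ contributes a genuine martingale increment (the paper's $\mathbb{M}^{i,n}$), and these have to be controlled separately. Second, even for the rebirth-jump part, bounded jump size alone gives nothing; you must sum the squared jumps, and the $1/N$ comes only because the geometric weights satisfy $\sum_n((N-1)/N)^{2n}\le N$, which combines with the jump bound $\|f\|_\infty^2/N^2$. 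For the inter-rebirth part the paper uses two orthogonality facts: distinct particles contribute orthogonal martingales (independence between rebirths, no simultaneous killing), and for a fixed particle the pieces $\mathbb{M}^{i,n}$ indexed by successive rebirth intervals are mutually orthogonal; summing their squares telescopes via~\eqref{EqTUmartingale1} to $E\big[\sum_n\psi_{\tau^i_n}(X^i_{\tau^i_n})^2\big]\leq\|f\|_\infty^2$, yielding $E[(\mathbb{M}^i)^2]\leq 2\|f\|_\infty^2$ and hence the factor $\sqrt{2}/\sqrt{N}$ after averaging over $i$. This is where the constant $1+\sqrt{2}$ actually comes from (triangle inequality on the two $L^2$ contributions $\sqrt{2/N}+\sqrt{1/N}$), not from a ``cross-variation'' as you suggest. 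Without this orthogonality/telescoping argument your $L^2$ bound is unproved.
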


\bi\textit{Remark 2.}  In~\cite{Rousset2006}, Rousset consider a process $\cal Z$ which is only subject to soft killings, with a uniformly
  bounded killing rate. In this particular case, the author proves a uniform rate of convergence over all times $T$, using the stability of
  the underlying Feynman-Kac semi-group. Our result shall be used in further work to extend Rousset's results to many process with sufficiently stable associated Feynman-Kac semi-group.

\smallskip
\noindent
\textit{Remark 3.} We emphasize that our approximation method only concerns the conditional distribution of the process. For results on the pathwise behaviour of the process conditioned not to be killed in the future, we refer the reader to the theory of $Q$-processes, which are processes evolving as $\cal Z$ conditioned to never be killed (see for instance~\cite{Lambert2007}).

\smallskip
\noindent
\textit{Remark 4.} The \textit{quasi-limiting distribution}, \textit{i.e.} $\lim_{t\rightarrow+\infty}\P_{\mu_0}\left({\cal Z}_T\in\cdot\mid T<\tau_{\d}\right)$, is the subject of several studies and is closely related (in fact equivalent) to the concept of \textit{quasi-stationary distribution} (see for instance~\cite{Meleard2011}). Theorem~\ref{theorem:convergence} implies that the quasi-limiting distribution, if it exists, is equal to $\lim_{t\rightarrow+\infty}\lim_{N\rightarrow\infty}\mu^N_t$.  Whether this limit is also equal to $\lim_{N\rightarrow\infty}\lim_{t\rightarrow+\infty}\mu^N_t$ (which has practical interests in numerical computation) remains an open problem in many situations and has been resolved quite recently in some particular cases (see for instance \cite{Burdzy2000},\cite{Ferrari2007},\cite{Grigorescu2011} and \cite{Villemonais2010}).

\begin{proof}[Proof of Theorem \ref{theorem:convergence}]
For any $N\geq 2$ and any $i\in\{1,\ldots,N\}$, we denote the increasing sequence of rebirth times of the particle $X^{i,N}$ by
$$ \tau^{i,N}_1 < \tau^{i,N}_2 < \cdots < \tau^{i,N}_n <\cdots $$
For any $t\geq 0$, we denote by $A^{i,N}_t$ the number of rebirths of the particle $X^{i,N}$ before time $t$:
$$
A^{i,N}_t=\max\{n\in\N,\ \tau^{i,N}_n\leq t\}.
$$
We also set $A^N_t=\sum_{i=1}^N A^{i,N}_t$ as the total number of rebirths in the $N$-particles system $\mathbb{X}^N$ before time $t$. Since ${\cal Z}^N$ is assumed to fulfil Hypothesis~A($N$), we have $\lim_{n\rightarrow\infty} \tau^{i,N}_n=+\infty$ almost surely and $A^N_t<+\infty$ for all $t\geq 0$ almost surely.

\smallskip \noindent For any $N\geq 2$, we denote by $(P^N_t)_{t\geq 0}$ the sub-Markov semi-group associated with the Markov process ${\cal Z}^N$. It is defined, for any bounded measurable function $f$ on $F\cup \{\d\}$ and any $t\geq 0$, by
$$
P^N_t f(x)=\E_x\left(f({\cal Z}^N_t)\1_{t<\tau^N_{\d}}\right),\ \forall x\in F\cup\{\d\}.
$$

\me The proof of Theorem~\ref{theorem:convergence} is divided into three steps. In a first step, we fix $N\geq 2$
and we prove that, for any bounded and measurable function $f$ on $F$, there exists a local martingale
$M^{N}$ such that
\begin{equation}
\label{EqTUsum1}
\mu^{N}_t\left(P^N_{T-t}f\right)=\mu^{N}_0\left(P^N_T f\right) +
M^{N}_t +
\frac{1}{N}\sum_{i=1}^N{\sum_{n=1}^{A^{i,N}_t}{
    \left[ 
      \frac{1}{N-1}\sum_{j\neq i}{P^N_{T-\tau_n^{i,N}}f(
        X^{j,N}_{\tau_n^{i,N}})}
      \right]
}},\ \forall t\in[0,T].
\end{equation}
In a second step, we define the non-negative measure $\nu^{N}_t$ on $F$ by
\begin{equation*}
  \nu^{N}_t(dx)=\left(\frac{N-1}{N}\right)^{A^{N}_t}\mu^{N}_t(dx).
\end{equation*}
The measure $\nu^{N}$ is obtained by multiplying the empirical measure $\mu^N$ by $(N-1)/N$ at each rebirth. This loss of mass is introduced in order to
compensate the last right term in~\eqref{EqTUsum1}: we prove that
$\nu^{N}_t\left(f\right) - \mu^{N}_0\left(P^{N}_t f\right)$ is a local martingale whose $L^2$ norm is bounded by $(1+\sqrt{2})\|f\|_{\infty}/\sqrt{N}$, thus
\begin{align}
\label{eq:sec1-maj-before-normalization}
    \sqrt{E\left(\left|\nu^{N}_{T}(f)-
          \mu^{N}_0(P^N_{T} f)\right|^2\right)}\leq
    \frac{(1+\sqrt{2})\|f\|_{\infty}}{\sqrt{N}}.
\end{align}
 In step $3$, we note that $\nu^{N}_T$ and $\mu^{N}_T$ are almost surely proportional and conclude the proof of Theorem~\ref{theorem:convergence} by renormalizing
 $\nu^{N}_T$ and $\mu^{N}_0\left(P^{N}_T \cdot\right)$ in~\ref{eq:sec1-maj-before-normalization}.

\bi \textit{Step 1: proof of decomposition~\eqref{EqTUsum1}.}\\
 Fix $N\geq 2$ and let
$f:F\cup\{\partial\}\mapsto \mathds{R}_+$ be a measurable bounded
function such that $f(\partial)=0$. 
We define, for all $t\in[0,T]$ and
$z\in F$,
\begin{equation*}
  \psi_t(z)=P^N_{T-t}f(z).
\end{equation*}
The process $\left(\psi_t({\cal Z}^N_t)\right)_{t\in[0,T]}$ is
a martingale with respect to the natural filtration of ${\cal Z}^N$ and is equal to $0$ at time $\tau_{\partial}^N$
on the event $\{\tau_{\partial}^N\leq T\}$.  Indeed,
for all $s,t\geq 0$ such that $s+t\leq T$, the Markov
property implies
\begin{equation*}
  E\left(\psi_{t+s}({\cal Z}^N_{t+s})|\left({\cal
    Z}^N_u\right)_{u\in[0,t]}\right)=P^N_{s}\psi_{t+s}({\cal
    Z}^N_t)=\psi_t({\cal Z}^N_t)
\end{equation*}
and $P^N_t f(\d)=0$ for any $t\geq 0$ yields
\begin{equation*}
  \psi_{\tau^N_{\partial}\wedge T}({\cal
    Z}^N_{\tau^N_{\partial}\wedge
    T})=\psi_{\tau^N_{\partial}}(\partial)\mathds{1}_{\tau^N_{\partial}\leq
    T}+\psi_{T}({\cal
    Z}^N_T)\mathds{1}_{\tau^N_{\partial} > T}=\psi_{T}({\cal
    Z}^N_T)\mathds{1}_{\tau^N_{\partial} > T}.
\end{equation*}

\me Fix $i\in\{1,...,N\}$. For any $n\geq 0$, we define the process
$\left(\mathds{M}^{i,n}_t\right)_{t\in[0,T]}$ -- which implicitely depends on $N$ -- by
\begin{equation*}
  \mathds{M}^{i,n}_t=\mathds{1}_{t<\tau^{i,N}_{n+1}}\psi_{t}(X^{i,N}_{t})-\psi_{t\wedge
    \tau^{i,N}_{n}}(X^{i,N}_{t\wedge\tau^{i,N}_{n}}),
\  \text{with } \tau^{i,N}_0:=0.
\end{equation*}
Since $X^{i,N}$ evolves as ${\cal Z}^N$ in the time interval
$[\tau^{i,N}_n,\tau^{i,N}_{n+1}[$, $\mathds{M}^{i,n}_t$ is a martingale with respect to the natural filtration of the whole particle system
and
\begin{equation*}
  \mathds{M}^{i,n}_t=
  \left\lbrace
  \begin{array}{l}
\displaystyle  0,\text{ if } t<\tau^{i,N}_{n}\ \text{ \textit{i.e.} }  A^{i,N}_t<n,\\
\displaystyle     \psi_{t}(X^{i,N}_{t})-\psi_{\tau^{i,N}_{n}}(X^{i,N}_{\tau^{i,N}_n}),\text{ if } t\in[\tau^{i,N}_{n},\tau^{i,N}_{n+1}[\ \textit{ i.e. } A^{i,N}_t=n,\\
\displaystyle     -\psi_{\tau^{i,N}_n}(X^{i,N}_{\tau^{i,N}_n}),\text{ if } t>\tau^{i,N}_{n+1}\ \textit{ i.e. } n < A^{i,N}_t.
  \end{array}
    \right.    
\end{equation*}
Summing over all rebirth times up to time $t$, we get
\begin{equation}
  \label{EqTUmartingale1}
  \psi_t(X^{i,N}_t)=\psi_0(X^{i,N}_0)+
  \sum_{n=0}^{A^{i,N}_t}\mathds{M}^{i,n}_t+\sum_{n=1}^{A^{i,N}_t}{\psi_{\tau_n^{i,N}}(
    X^{i,N}_{\tau_n^{i,N}})}.
\end{equation}
Defining the local martingales
\begin{equation*}
  \mathds{M}^{i}_t=\sum_{n=0}^{A^{i,N}_t} \mathds{M}^{i,n}_t \text{ and }     \mathds{M}_t=\frac{1}{N}\sum_{i=1}^N \mathds{M}^{i}_t
\end{equation*}
and summing over $i\in\{1,...,N\}$, we obtain
\begin{equation*}
  \mu_t^{N}(\psi_t)=\mu^{N}_0(\psi_0)+\mathds{M}_t+\frac{1}{N}\sum_{i=1}^N \sum_{n=1}^{A^{i,N}_t} {\psi_{\tau_n^{i,N}}(
    X^{i,N}_{\tau_n^{i,N}})}.
\end{equation*}
  At each rebirth time $\tau_n^{i,N}$, the rebirth position of the particle
  $X^{i,N}$ is chosen with respect to the empirical
  measure of the other particles.
  The expectation of ${\psi_{\tau_n^{i,N}}(
    X^{i,N}_{\tau_n^{i,N}})}$ conditionally to the position of the
  other particles at the rebirth time is then the average value $
  \frac{1}{N-1}\sum_{j\neq i} {\psi_{\tau_{n-}^{i,N}}(
    X^{j,N}_{\tau_{n-}^{i,N}})}$.  
  We deduce that the process $\cal M$, defined by
  \begin{equation*}
    {\cal M}_t=\frac{1}{N}\sum_{i=1}^N{\sum_{n=1}^{A^{i,N}_t}{\left(\psi_{\tau^{i,N}_n}(X^{i,N}_{\tau^{i,N}_n})-
       \frac{1}{N-1}\sum_{j\neq i} {\psi_{\tau_{n-}^{i,N}}(
    X^{j,N}_{\tau_{n-}^{i,N}})}
        \right)}},\ \forall t\geq 0,
  \end{equation*}
  is a local martingale. We finally get
  \begin{equation}
    \label{EqTUsum2}
    \mu_t^{N}(\psi_t)=\mu^{N}_0(\psi_0)+\mathds{M}_t+{\cal
      M}_t+\frac{1}{N}\sum_{i=1}^N{\sum_{n=1}^{A^{i,N}_t}{
    \left[\frac{1}{N-1}\sum_{j\neq i} {\psi_{\tau_{n-}^{i,N}}(
    X^{j,N}_{\tau_{n-}^{i,N}})}\right]
    }},
  \end{equation}
  which is exactly \eqref{EqTUsum1} (we recall that $\mathds{M}$ and $\cal M$ implicitly depend on $N$).

\bi \textit{Step 2: proof of inequality~\eqref{eq:sec1-maj-before-normalization}.}\\
 The number of particles $N\geq 2$ is still fixed. Let us first prove that, for any $\alpha\in\{1,2,\ldots\}$,
$\nu^{N}_{T\wedge\tau^{N}_{\alpha}}(\psi_{T\wedge\tau^{N}_{\alpha}})-
\nu^{N}_0(\psi_0)$ is a local martingale. In order to simplify the notations and since $N\geq 2$ is fixed, we remove the superscripts $N$ until the end of this step when there is no risk of confusion.

\me 
The jump of the processes $(A_t)_{t\geq 0}$ and $\cal M$ only occurs at the rebirth times, thus we deduce from~\eqref{EqTUsum2} that
    \begin{multline*}
      \nu_T(\psi_T)-\nu_0(\psi_0)=\int_0^T\left(\frac{N-1}{N}\right)^{A_{t\minus}}d\mathds{M}_t
      - 
		\sum_{n=1}^{A_T} \left(\frac{N-1}{N}\right)^{A_{\tau_n\minus}}\left(\mathds{M}_{\tau_n}-\mathds{M}_{\tau_n\minus}\right)\\
      + 
      	\sum_{n=1}^{A_T}\nu_{\tau_n}(\psi_{\tau_n})-\nu_{\tau_n\minus}(\psi_{\tau_n\minus}).
    \end{multline*}
    Let us develop and compute each term $\nu_{\tau_n}(\psi_{\tau_n})-\nu_{\tau_n\minus}(\psi_{\tau_n\minus})$ in the right hand side. For any $n\geq 1$,
    \begin{multline*}
      \nu_{\tau_n}(\psi_{\tau_n})-\nu_{\tau_n\minus}(\psi_{\tau_n\minus})
      =\left(\frac{N-1}{N}\right)^{A_{\tau_n}}\left(\mu_{\tau_n}(\psi_{\tau_n})-\mu_{\tau_n\minus}(\psi_{\tau_n\minus})\right)\\
      +\mu_{\tau_n\minus}(\psi_{\tau_n\minus})\left(\left(\frac{N-1}{N}\right)^{A_{\tau_n}}-\left(\frac{N-1}{N}\right)^{A_{\tau_n\minus}}\right).
    \end{multline*}
On the one hand, denoting by $i$ the index of the killed particle at time $\tau_n$, we have
\begin{eqnarray*}
 \mu_{\tau_n}(\psi_{\tau_n})-\mu_{\tau_n\minus}(\psi_{\tau_n\minus})
   &=&\frac{1}{N(N-1)}\sum_{j\neq i} {\psi_{\tau_n^{i}\minus}(
    X^{j}_{\tau_n^{i}\minus})}
      + \mathds{M}_{\tau_n}-\mathds{M}_{\tau_n\minus}+{\cal M}_{\tau_n}-{\cal M}_{\tau_n\minus},
\end{eqnarray*}
where 
\begin{equation*}
\frac{1}{N(N-1)}\sum_{j\neq i} {\psi_{\tau_n^{i}\minus}(
    X^{j}_{\tau_n^{i}\minus})}=\frac{1}{N-1}\mu_{\tau_n\minus}(\psi_{\tau_n\minus})
      - \frac{1}{N(N-1)} \psi_{\tau_n\minus}(X^i_{\tau_n\minus})
\end{equation*}
and, by the definition of $\mathds{M}$,
\begin{equation*}
  - \frac{1}{N(N-1)} \psi_{\tau_n\minus}(X^i_{\tau_n\minus})=\frac{1}{N-1}\left(\mathds{M}_{\tau_n}-\mathds{M}_{\tau_n\minus}\right).
\end{equation*}
We then have
\begin{eqnarray*}
 \mu_{\tau_n}(\psi_{\tau_n})-\mu_{\tau_n\minus}(\psi_{\tau_n\minus})
   &=&\frac{1}{N-1}\mu_{\tau_n\minus}(\psi_{\tau_n\minus})
      + \frac{N}{N-1}\left(\mathds{M}_{\tau_n}-\mathds{M}_{\tau_n\minus}\right)+{\cal M}_{\tau_n}-{\cal M}_{\tau_n\minus},
\end{eqnarray*}
On the other hand, we have
\begin{eqnarray*}
  \left(\frac{N-1}{N}\right)^{A_{\tau_n}}-\left(\frac{N-1}{N}\right)^{A_{\tau_n\minus}}=-\frac{1}{N-1}\left(\frac{N-1}{N}\right)^{A_{\tau_n}}.
\end{eqnarray*}
Finally we get
\begin{eqnarray*}
  \nu_{\tau_n}(\psi_{\tau_n})-\nu_{\tau_n\minus}(\psi_{\tau_n\minus})
    &=&  \left(\frac{N-1}{N}\right)^{A_{\tau_n}\minus}\left(\mathds{M}_{\tau_n}-\mathds{M}_{\tau_n\minus}\right)
       + \left(\frac{N-1}{N}\right)^{A_{\tau_n}}\left({\cal M}_{\tau_n}-{\cal M}_{\tau_n\minus}\right).
\end{eqnarray*}
  We deduce that the process  $\nu_{t}(\psi_t)-
    \nu_0(\psi_0)$ is the sum of two local martingales and we have
  \begin{equation}
    \label{EqTUnudecomp1}
    \nu_{T}(\psi_{T})-
    \nu_0(\psi_0)=\int_0^{T}{\left(\frac{N-1}{N}\right)^{A_{t\minus}}d\mathds{M}_t}
    +\frac{N-1}{N}\int_{0}^{T}\left(\frac{N-1}{N}\right)^{A_{t\minus}}d{\cal
      M}_t
  \end{equation}

\me  Let us now bound both terms on the right-hand side of~\eqref{EqTUnudecomp1} (where $N$ is still
  fixed).  
In order to handle martingales instead of local martingales, we fix an integer $\alpha\geq 1$
  and we stop the particle system when the number of rebirths
  $A_t$ reaches $\alpha$, which is equivalent to stop the process at
  time $\tau_{\alpha}$ (which is actually $\tau^{N}_{{\alpha}}$). The processes $\mathds{M}$ and ${\cal M}$ stopped at
  time $\tau_{\alpha}$ are almost surely bounded by $\alpha \|f\|_{\infty}$.
  By the optional stopping
  time theorem, we deduce that $\mathds{M}$ and ${\cal M}$ stopped at
  time $\tau^{N}_{\alpha}$ are martingales.
On the one hand, the martingale jumps occuring at rebirth times ${\cal
    M}_{\tau_n}-{\cal M}_{\tau_n\minus}$ are bounded by
  $\|f\|_{\infty}/N$ (since its jumps are the difference between two positive terms bounded by $\|f\|_{\infty}/N$) and the martingale is constant between rebirth times, thus
  \begin{eqnarray}
    \nonumber
    E\left(\left| \frac{N-1}{N}\int_{0}^{T\wedge\tau_{\alpha}}\left(\frac{N-1}{N}\right)^{A_{t\minus}}d{\cal
          M}_t \right|^2\right)
      &\leq &
        E\Bigg[\sum_{n=1}^{A_T\wedge \alpha}
         {\left(\frac{N-1}{N}\right)^{2 A_{\tau_n\minus}}}{\left({\cal M}_{\tau_n}-{\cal M}_{\tau_n\minus}\right)^2}\Bigg]\\
      &\leq&\frac{\|f\|_{\infty}^2}{N}. \label{EqTUborneM1}
  \end{eqnarray}
  On the other hand, we have
  \begin{equation*}
    \begin{split}
      E\left(\left(\int_0^{T\wedge\tau_{\alpha}}{\left(\frac{N-1}{N}\right)^{A_{t-}}d\mathds{M}_t}\right)^2\right)
      &\leq
      E\left(\left(\mathds{M}_{T\wedge\tau_{\alpha}}\right)^2
      \right)\\
      &=\frac{1}{N^2}\sum_{i,j=1}^N
      E\left(\mathds{M}^{i}_{T\wedge\tau_{\alpha}}\mathds{M}^{j}_{T\wedge\tau_{\alpha}} \right)
    \end{split}
  \end{equation*}
  where
  \begin{equation*}
    E\left( \mathds{M}^{i}_{T\wedge\tau_{\alpha}}
      \mathds{M}^{j}_{T\wedge\tau_{\alpha}}
    \right)=\sum_{m=0,n=0}^{\alpha}E\left(\mathds{M}^{i,m}_{T\wedge\tau_{\alpha}}\mathds{M}^{j,n}_{T\wedge\tau_{\alpha}}\right).
  \end{equation*}
  If $i\neq j$, then the expectation of the product of the martingales
  $\mathds{M}^{i,n}$ and $\mathds{M}^{j,m}$ is $0$, since the
  particles are independent between the rebirths and do not undergo
  simultaneous rebirths. Thus we have
  \begin{equation*}
  	E\left( \mathds{M}^{i}_{T\wedge\tau_{\alpha}}
      \mathds{M}^{j}_{T\wedge\tau_{\alpha}}
    \right)=0,\ \forall i\neq j.
  \end{equation*}
  Assume now that $i=j$ and fix $m<n$. By definition, we have
  \begin{equation*}
    \mathds{M}^{i,m}_{T\wedge\tau_{\alpha}}=\mathds{M}^{i,m}_{T\wedge\tau_{\alpha}\wedge\tau^{i}_{m+1}},
  \end{equation*}
  which is measurable with respect to $\mathds{X}_{T\wedge
    \tau_{\alpha} \wedge \tau^{i}_{m+1}}$, then
  \begin{equation*}
    \begin{split}
      E\left( \mathds{M}^{i,m}_{T\wedge\tau_{\alpha}}
        \mathds{M}^{i,n}_{T\wedge\tau_{\alpha}} |
        \mathds{X}_{T\wedge\tau_{\alpha}\wedge \tau^{i}_{m+1}}
      \right)&= \mathds{M}^{i,m}_{T\wedge\tau_{\alpha}\wedge \tau^{i}_{m+1}}
      E\left(\mathds{M}^{i,n}_{T\wedge\tau_{\alpha}} | \mathds{X}_{T\wedge\tau_{\alpha}\wedge \tau^{i}_{m+1}}
      \right)\\
      &= \mathds{M}^{i,m}_{T\wedge\tau_{\alpha}\wedge \tau^{i}_{m+1}} \mathds{M}^{i,n}_{T\wedge\tau_{\alpha}\wedge \tau^{i}_{m+1}}=0,
    \end{split}
  \end{equation*}
  applying the optional sampling theorem with the martingale
  $\mathds{M}^{i,n}_{T\wedge\tau_{\alpha}}$ stopped at the uniformly bounded
  stopping time $T\wedge \tau_{\alpha} \wedge \tau^{i}_n$. We
  deduce that
  
  \me
  \begin{equation*}
    \begin{split}
      E\left(\left(\mathds{M}^{i}_{T\wedge\tau_{\alpha}}\right)^2\right)
      &=E\left(\sum_{n=0}^{\alpha}\left(\mathds{M}^{i,n}_{T\wedge\tau_{\alpha}}\right)^2\right)\\
      &\leq E\left( \sum_{n=0}^{A_{T\wedge \tau_{\alpha}}-1}\left( \psi_{\tau^{i,N}_n}\left(X^{i}_{\tau^i_n}\right) \right)^2 \right)
+ E\left( \left(\mathds{M}^{i,A_{T\wedge \tau_{\alpha}}}_{T\wedge\tau_{\alpha}}\right)^2 \right)\\
      &\leq E\left(\sum_{n=0}^{\alpha}
        \psi_{T\wedge\tau^{i}_n}(X^{i}_{T\wedge\tau^{i}_n})^2\right)+\|f\|_{\infty}^2\\
      &\leq \|f\|_{\infty}E\left(\sum_{n=0}^{\alpha}
        \psi^N_{T\wedge\tau^{i}_n}(X^{i}_{T\wedge\tau^{i}_n})\right)+\|f\|_{\infty}^2.
    \end{split}
  \end{equation*}
  
  \me
  By  \eqref{EqTUmartingale1}, we have
  \begin{equation*}
    E\left(\sum_{n=0}^{\alpha} \psi_{T\wedge\tau^{i}_n}(X^{i}_{T\wedge\tau^{i}_n})\right)\leq \|f\|_{\infty},
  \end{equation*}
thus
  \begin{equation*}
      E\left(\left(M^{i}_{T\wedge\tau_{\alpha}}\right)^2\right)\leq 2\|f\|_{\infty}^2.
  \end{equation*}
We finally have
  \begin{equation}
    \label{EqTUborneM2}
    E\left(\left(\int_0^{T\wedge\tau_{\alpha}}{\left(\frac{N-1}{N}\right)^{A_{t\minus}}d\mathds{M}_t}\right)^2\right) \leq \frac{2\|f\|_{\infty}^2}{N}.
  \end{equation}
  The decomposition~\eqref{EqTUnudecomp1} and inequalities \eqref{EqTUborneM1}
  and \eqref{EqTUborneM2} lead us to
  \begin{equation*}
    \sqrt{E\left(\left|\nu^{N}_{T\wedge\tau_{\alpha}}(P^N_{T-T\wedge
    \tau_{\alpha}}f)-
    \mu^{N}_0(P^N_{T\wedge\tau_{\alpha}}
    f)\right|^2\right)}\leq \frac{(1+\sqrt{2})\|f\|_{\infty}}{\sqrt{N}}.
  \end{equation*}
  By assumption, the number of rebirths of the $N$-particles system remains
  bounded up to time $T$ almost surely. We deduce that
  $T\wedge\tau^{N}_{\alpha}$ converges to $T$ when $\alpha$ goes to infinity (actually $T\wedge\tau^{N}_{\alpha}=T$ for $\alpha$ big enough
  almost surely). As a consequence, letting $\alpha$ go to infinity in the above inequality and using the dominated
  convergence theorem, we obtain
   \begin{equation}
         \label{EqTUconv1}
     \sqrt{E\left(\left|\nu^{N}_{T}(f)-
           \mu^{N}_0(P^N_{T} f)\right|^2\right)}\leq
     \frac{(1+\sqrt{2})\|f\|_{\infty}}{\sqrt{N}}.
   \end{equation}


\bi \textit{Step 3: conclusion of the proof of Theorem~\ref{theorem:convergence}.}\\
 By
 assumption, $\mu^{N}_0(P_T^N .)$ converges weakly to $\mu_0(P_T .)$. Thus, for any bounded continuous function $f:F\rightarrow\mathds{R}_+$, the sequence of random
 variables $\left(\mu^{N}_0(P^N_T \mathbf{1}_F), \mu^{N}_0(P^N_T
 f)\right)$ converges in distribution to the random variable
 $\left(\mu_0(P_T \mathbf{1}_F), \mu_0(P_T f)\right)$. By
 \eqref{EqTUconv1}, we deduce that the sequence of random variables
 $\left(\nu^{N}_T(\mathbf{1}_F),\nu^{N}_T(f)\right)$ converges in
 distribution to the random variable $\left(\mu_0(P_T \mathbf{1}_F),
 \mu_0(P_T f)\right)$. Now $\mu_0(P_T
 \mathbf{1}_F)$ never vanishes almost surely, thus
  \begin{equation*}
    \mu^{N}_T(f)=\frac{\nu^{N}_T(f)}{\nu^{N}_T(\1_F)}\xrightarrow[N\rightarrow\infty]{law}\frac{\mu_0(P_T
    f)}{\mu_0(P_T \1_F)},
  \end{equation*}
  for any bounded continuous function
  $f:F\rightarrow\mathds{R}_+$, which implies the first part of
  Theorem~\ref{theorem:convergence}.

 \me We also deduce from~\eqref{EqTUconv1} that
  \begin{equation*}
    \sqrt{E\left(\left|\left(\frac{N-1}{N}\right)^{A^{N}_T}-\mu^{N}_0\left(P^N_T
            \1_F\right)\right|^2\right)}\leq  \frac{1+\sqrt{2}}{\sqrt{N}},
  \end{equation*}
  then, using~\eqref{EqTUconv1} and the triangle inequality,
  \begin{equation*}
    \sqrt{E\left(\left|\mu^{N}_0(P^N_T \1_F) \mu^{N}_T(f)-
          \mu^{N}_0(P^N_T f)\right|^2\right)}\leq
    \frac{2(1+\sqrt{2})\|f\|_{\infty}}{\sqrt{N}}.
  \end{equation*}
  Using the Cauchy--Schwartz inequality, we finally deduce that
  \begin{equation*}
    E\left(\left|\mu^{N}_T(f)- \frac{\mu^{N}_0(P^N_T
          f)}{\mu^{N}_0\left(P^N_T \1_F\right)}\right|\right)\leq
    \sqrt{E\left(\frac{1}{\left(\mu^{N}_0\left(P^N_T\1_F\right)\right)^2}\right)}
    \frac{2(2+\sqrt{2})\|f\|_{\infty}}{\sqrt{N}},
  \end{equation*}
  which concludes the proof of Theorem~\ref{theorem:convergence}.
\end{proof}

\section{Criterion ensuring the non-explosion of the number of rebirths}
\label{section:non-explosion}

 In this section, we fix $N\geq 2$ and consider $N$-particles systems whose particles
evolve as 
independent 
diffusion processes and are subject to rebirth after hitting a boundary or after some exponential times. We begin by defining the diffusion processes which will drive the particles between the rebirths, then we define the jump measures giving the distribution of the rebirths
positions of the particles. As explained in the introduction, the interacting particle system will be well defined if and only if there is no accumulation of rebirths in finite time almost surely. The main result of this section is a criterion (given by Hypotheses~\ref{hypothesis:particle-dynamics} and~\ref{hypothesis:jump-measure} below) which ensures that the number of rebirths remains finite in finite time almost surely, so that the $N$-particles system is well defined at any time $t\geq 0$.


\bi For each $i\in\{1,\ldots,N\}$, let $E_i$ be an open subset of $\R^{d_i}$ ($d_i\geq 1$) and $D_i$ be an open subset of $\R^{d'_i}$ ($d'_i\geq 1$). Let 
$(e^i_{t},Z^i_{t})_{t\geq 0}$ be a \textit{time-inhomogeneous environment-dependent diffusion process} evolving in $E_i\times D_i$, 
where $e^{i}_t\in E_i$ denotes the state of the environment and
    $Z^{i}_t\in D_i$ denotes the actual position of the diffusion at time $t$.  Each diffusion
 process $(e^i_{t},Z^i_{t})_{t\geq 0}$ will be used to define the dynamic of the
 $i^{th}$ particle of the system between its rebirths.
By a \textit{time-inhomogeneous environment-dependent diffusion
  process}, we mean that, for any
$i\in\{1,...,N\}$, there exist four measurable functions
 \begin{equation*}
   \begin{split}
     &s_i:[0,+\infty[\times E_i\times D_i\mapsto
     \mathbb{R}^{d_i}\times\mathbb{R}^{d_i}\\
     &m_i:[0,+\infty[\times E_i\times D_i\mapsto
     \mathbb{R}^{d_i}\\
     &\sigma_i:[0,+\infty[\times E_i\times D_i\mapsto
     \mathbb{R}^{d'_i}\times\mathbb{R}^{d'_i}\\
     &\eta_i:[0,+\infty[\times E_i\times D_i\mapsto \mathbb{R}^{d'_i},
  \end{split}
 \end{equation*}
 such that $(e^{i},Z^{i})$ is solution to the
 stochastic differential system
 \begin{equation*}
   \begin{split}
     de^{i}_t&=s_i(t,e^{i}_t,Z^{i}_t)d\beta^{i}_t+m_i(t,e^{i}_t,Z^{i}_t)dt,\ \quad e^{i}_0\in E_i,\\
     dZ^{i}_t&=\sigma_i(t,e^{i}_{t},Z^i_{t})dB^{i}_t+\eta^{i}({t},e^{i}_{t},Z^{i}_{t})
     dt,\ \quad Z^{i}_0\in D_i,
   \end{split}
 \end{equation*}
 where the $(\beta^{i},B^{i})$, $i=1,\ldots,N$, are independent standard $d_i+d'_i$ Brownian
 motions.  We assume that the process $(e^i_{t},Z^i_{t})_{t\geq 0}$ is subject to two different kind of killings: it is killed  when $Z^{i}_t$ hits $\partial D_i$ (\textit{hard killing}) and with a rate of killing
 $\kappa_i(t,e^{i}_t,Z^{i}_t)\geq 0$ (\textit{soft killing}), where
 \begin{equation*}
   \kappa_i:[0,+\infty[\times E_i\times D_i\mapsto \mathbb{R}_+
 \end{equation*}
 is a uniformly bounded measurable function. 
 When a process $(e^i,Z^i)$ is killed, it is immediately sent to a cemetery point $\partial\notin E_i \times D_i$ where it remains forever. In particular,  the killed process is c\`adl\`ag with values in $E_i \times D_i\cup \{\partial\}$.

\bi We assume that we're given two measurable functions
 \begin{equation*}
   {\cal
   S}:[0,+\infty[\times E_1\times\cdots\times E_N \times D_1\times\cdots\times D_N\rightarrow {\cal
       M}_1(E_1\times\cdots\times E_N \times D_1\times\cdots\times D_N)
 \end{equation*}
 and
 \begin{equation*}
   {\cal H}:[0,+\infty[\times
         E_1\times\cdots\times E_N \times \partial (D_1\times\cdots\times D_N)\rightarrow {\cal
           M}_1(E_1\times\cdots\times E_N\times D_1\times\cdots\times D_N),
 \end{equation*}
 where ${\cal M}_1(E_1\times\cdots\times E_N\times D_1\times\cdots\times D_N)$ denotes the space of probability measures on $E_1\times\cdots\times E_N\times D_1\times\cdots\times D_N$. These measures will be used to define the distribution of the rebirths locations of the particles, respectively for soft and hard killings.

 \bi 
  The $N$-particles system is denoted by $(\mathbb{O}_t,\mathbb{X}_t)_{t\geq 0}$, where $\mathbb{O}_t=(o^{1}_t,\cdots,o^{N}_t)\in E_1\times\cdots\times E_N$ is the vector of environments at time $t$ and $\mathbb{X}_t=(X^{1}_t,\cdots,X^{N}_t)\in D_1\times\cdots\times D_N$ is the vector of positions of the particles  at time $t$. In particular, the position of the $i^{th}$ particle of the system at time $t\geq 0$ is given by $(o^{i}_t,X^{i}_t)\in E_i\times D_i$.
 
\me \textbf{Dynamic of the particle system} 
 
 \noindent The particles of the
 system evolve
 as independent copies of $(e^i,Z^i)$, $i=1,...,N$, until one of
 them is killed. Then
 \begin{itemize}
 \item[--] if two or more particles are simultaneously killed at this time, we stop the definition of the particle system and say that it undergoes a \textit{failure}. This time is denoted by $\tau_{stop}$.
 \item[--] otherwise,
 \begin{itemize} 
 \item[-] if the unique killed particle is softly killed, then the whole system
 jumps instantaneously with respect to the jump measure ${\cal
   S}(t,\mathbb{O}_{t\minus},\mathbb{X}_{t\minus})$
   \item[-] if the unique killed particle is hardly
 killed, then the whole system
 jumps instantaneously with respect to ${\cal
   H}(t,\mathbb{O}_{t\minus},\mathbb{X}_{t\minus})$
   \end{itemize}
   and, in both cases, we say that the system undergoes a \textit{rebirth}.
 \end{itemize}
 After this operation and in the case of a rebirth, all the particles lie in $F$. Then they evolve
 as independent copies of $(e^i,Z^i)$, $i=1,...,N$, until one of
 them is killed (which leads to a failure or a rebirth) and so on.

 \bi If the process does never fail, we set $\tau_{stop}=+\infty$. We denote the successive rebirth times of the particle system by
 \begin{equation*}
   \tau_1 < \tau_2 < ... <\tau_n <...
 \end{equation*}
 and we set $\tau_{\infty}=\lim_{n\rightarrow\infty} \tau_n$. Since there is no natural way to define the particle system after time $\tau_{\infty}\wedge \tau_{stop}$, the $N$-particles system is well defined at any time $t\geq 0$ if and only if  $\tau_{\infty}\wedge \tau_{stop}=+\infty$ almost surely. In Theorem~\ref{ThExistence} below, we state that this happens if Hypotheses~\ref{hypothesis:particle-dynamics} and~\ref{hypothesis:jump-measure} below are fulfilled; this is the main result of this section. We emphasize that the above construction and the assumptions of Theorem~\ref{ThExistence} include the particular case of the Fleming-Viot type system studied in Section~\ref{section:approximation}.

\bi The first assumption concerns the processes $(e^i,Z^i)$, $i=1,\ldots,N$, which drive the particles between the rebirths. We denote by $\phi_{i}$ the Euclidean distance to
 the boundary $\partial D_i$, defined for all $x\in \mathbb{R}^{d'_i}$ by
 \begin{equation*}
   \phi_i(x)=\inf_{y\in \partial D_i} \|x-y\|_2,
 \end{equation*}
 where $\|.\|_2$ denotes the Euclidean norm on $\mathbb{R}^{d'_i}$. For any $a>0$ and any $i\in\{1,\ldots,N\}$, we define the boundary's neighbourhood $D_i^a$ by
 $$
 	D_i^a=\{x\in D_i,\ \phi_i(x)<a\}.
 $$

\begin{hypothesis}
  \label{hypothesis:particle-dynamics}
  We assume that there exist five positive constants $a_0$, $k_g$, $c_{0}$ and $C_{0}$ such that
  \begin{enumerate}
  \item $\phi_i$ is of class $C^2$ on $D_i^{a_0}$, with uniformly bounded derivatives,
  \item $\kappa_{i}$ is uniformly bounded on $[0,+\infty[\times E_i
      \times D_i$ and $s_i,\sigma_i,m_i$ and $\eta_i$
      are uniformly bounded on $[0,+\infty[\times E_i\times D_i^{a_0}$,
      \item for any  $i\in\{1,...,N\}$, there exist two
    measurable functions $f_i:[0,+\infty[\times E_i\times D^{a_0}_i\rightarrow
    \mathbb{R}_+$ and $g_i:[0,+\infty[\times E_i\times D^{a_0}_i\rightarrow
    \mathbb{R}$ such that $\forall (t,e,z)\in [0,+\infty[\times
    E_i\times D^{a_0}_i$,
    \begin{equation}
      \label{chapitre4:EqHyThExDriftTerm}
      \sum_{k,l}\frac{\partial \phi_i}{\partial x_k}(z) \frac{\partial
        \phi_i}{\partial x_l}(z)
      [\sigma_i\sigma^{*}_i]_{kl}(t,e,z)=f_i(t,e,z)+g_i(t,e,z),
    \end{equation}
    and such that
    \begin{enumerate}
    \item $f_i$ is of class $C^1$ in time and of class $C^2$ in
      environment/space on $[0,+\infty[\times E_i\times D^{a_0}_i$, with uniformly bounded derivatives,
    \item for all
      $(t,e,z)\in[0,+\infty[\times E_i \times D^{a_0}_i$,
      \begin{equation*}
	|g_i(t,e,z)|\leq k_g\phi_i(z),
      \end{equation*}
    \item for all $(t,e,z)\in[0,+\infty[\times E_i \times D^{a_0}_i$,
      \begin{equation*}
        c_{0}<f_i(t,e,z)<C_{0}\text{ and }c_{0}<f_i(t,e,z)+g_i(t,e,z)<C_{0}.
      \end{equation*}
    \end{enumerate}
  \end{enumerate}
\end{hypothesis}

\me \textit{Remark 5.} We recall that the $C^k$ regularity of $\phi_i$ near the
    boundary is equivalent to the $C^k$ regularity of the boundary
    $\partial D_i$ itself, for all $k\geq 2$ (see \cite[Chapter 5, Section
    4]{Delfour2001}).

\smallskip\noindent \textit{Remark 6.} The euclidean distances $\phi_i$ in Hypothesis~\ref{hypothesis:particle-dynamics} could be replaced by other functions with similar properties, in order to weaken the assumption. However, for sake of clarity, we do not develop this possibility and refer the reader to~\cite{Grigorescu2011} for an alternative class of \textit{distance like} functions.

\bi Our second assumption is related to the rebirth measures ${\cal H}$ and ${\cal S}$. We emphasize that if a rebirth due to hard killing happen, then at most one particle hits its associated boundary $\partial D_i$. This implies that, at a rebirth time, the whole set of particles hits one and only one of the sets ${\cal D}_i$, $i=1,\cdots,N$, defined by
\begin{equation*}
{\cal D}_i=\left\lbrace (x_1,\cdots,x_N)\in\partial (D_1\times\ldots\times D_N),\ x_i\in \partial D_i\ \mbox{and}\ x_j\in D_j,\forall j\neq i \right\rbrace.
\end{equation*}
With this definition, it is clear that the $i^{th}$ particle undergoes a hard killing leading to a rebirth if and only if $(\cdot,\mathbb{O},\mathbb{X})$ hits $[0,+\infty[\times (E_1\times\ldots\times E_N)\times {\cal D}_i$. In particular, the values taken by ${\cal H}$ outside these sets does not influence the behaviour of the particle system.

 \begin{hypothesis}
 \label{hypothesis:jump-measure}
 We assume that
     \begin{enumerate}
  \item There exists a non-decreasing continuous function
    $h:\mathbb{R}_+\rightarrow\mathbb{R}_+$ vanishing only at $0$ such that,
    $\forall i\in\{1,...,N\}$,
    \begin{equation*}
      \inf_{(t,e,(x_1,...,x_N))\in[0,+\infty[\times (E_1\times\ldots\times E_N)\times {\cal D}_i}{\cal H}(t,e,x_1,...,x_N)\left((E_1\times\ldots\times E_N)\times A_{i}\right)\geq p_0,
    \end{equation*}
    where $p_0>0$ is a positive constant and $A_{i}\subset D_1\times\ldots\times D_N$ is the
    set defined by
    \begin{equation*}
      A_{i}=\left\lbrace (y_1,\ldots,y_N)\mid\exists j\neq i\text{ such that }\phi_{i}(y_{i})\geq h(\phi_j(y_j)) \right\rbrace.
    \end{equation*}
    
  \item We have
    \begin{equation*}
      \inf_{(t,e,x)\in[0,+\infty[\times (E_1\times\cdots\times E_N)\times {\cal D}_i}{\cal H}(t,e,x_1,...,x_N)(B_{e,x})=1,
    \end{equation*}
    where $B_{e,x}\subset (E_1\times\ldots\times E_N)\times(D_1\times\ldots\times D_N)$ is defined by
    \begin{equation*}
      B_{e,x}=\left\lbrace (e',x')\,|\,\forall j,\  \phi_{j}(x'_{j})\geq \phi_j(x_j)\left(1\vee\frac{\sqrt{f_j(t,e'_j,x'_j)}}{\sqrt{f_j(t,e_j,x_j)}}\right)  \right\rbrace,
    \end{equation*}
    $f_j$ being the function of Hypothesis~\ref{hypothesis:particle-dynamics}, extended outside $[0,+\infty[\times E_j\times D^{a_0}_j$ by the value $c_0$.
  \end{enumerate}
 \end{hypothesis}
 
\medskip \noindent Let us explain Hypothesis~\ref{hypothesis:jump-measure}.
 
\medskip  \noindent - The set $A_i$ is a subset of $D_1\times\ldots\times D_N$ such that if the $i^{th}$ component of an element $(y_1,\cdots,y_n)\in A_i$ is near the boundary $\partial D_i$ (that is if $\phi_i(y_i)\ll 1$), then at least one another component $y_j$ fulfils $h(\phi_j(y_j))\ll 1$ and thus $\phi_D(y_j)\ll 1$. This implies that if the rebirth position of the $i^{th}$ particle after a hard killing is located near the boundary $\partial D_i$, then, with a probability lowered by $p_0$, at least one other particle is located near the boundary.

\medskip \noindent - The set $B_{e,x}$ is a subset of $E_1\times\ldots \times E_N\times D_1\times\ldots \times D_N$ such that for any $(e',x')\in B_{e,x}$, the component $x'_j$ lies farther from the boundary than $x_j$, for any $j\in\{1,\ldots,N\}$. This means that after a hard killing, the rebirth position of any particle lies farther from the boundary after the rebirth than before the rebirth. 

\bi \textit{Remark 7.} Hypothesis~\ref{hypothesis:jump-measure} includes the case studied
      by Grigorescu and Kang in \cite{Grigorescu2011} (without environment dependency or time inhomogeneity), where
      \begin{equation*}
        \label{s11_e8}
        {\cal H}= \sum_{j\neq i} p_{ij}(x_i) \delta_{x_j},\ \forall (x_1,...,x_N)\in {\cal D}_i.
      \end{equation*}
      with $\sum_{j\neq i} p_{ij}(x_i)=1$ and $\inf_{i\in\{1,...,N\},j\neq
        i, x_i\in \partial D} p_{ij}(x_i)>0$.  Indeed, in that case, the rebirth position of any killed particle is the position of another particle. It
      implies that Hypothesis~\ref{hypothesis:jump-measure} is fulfilled with $p_0=1$ and
      $h(\phi_j)=\phi_j$. This case also includes the Fleming-Viot type
      system of the approximation method proved in Section~\ref{section:approximation}.

\bi We're now able to state the main result of this section.
\begin{theorem}
  \label{ThExistence}
  If Hypotheses~\ref{hypothesis:particle-dynamics} and \ref{hypothesis:jump-measure}
  are satisfied, then $\tau_{stop}\wedge\tau_{\infty}=+\infty$ almost surely. In particular, the $N$-particles system with rebirth $(\cdot,\mathbb{O},\mathbb{X})$ is well defined at any time $t\geq 0$.
\end{theorem}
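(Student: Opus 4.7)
My plan is to argue by contradiction. Assume $\mathbb{P}(\tau_{stop}\wedge\tau_\infty \le T) > 0$ for some $T > 0$; I will exhibit, on an event of positive probability, a pair of distinct particles whose rescaled distances to their respective boundaries simultaneously reach $0$ in finite time, and then invoke the non-attainability of $(0,0)$ theorem of Section~\ref{section:non-attainability} to derive a contradiction. A first reduction observes that soft killings alone cannot cause explosion: by Hypothesis~\ref{hypothesis:particle-dynamics}.2 their total rate is uniformly bounded by $N\|\kappa\|_\infty$, hence they contribute only finitely many rebirths to $[0,T]$ almost surely. Only two dangerous scenarios remain: a failure (two simultaneous killings, which immediately gives two particles $i\neq j$ with $\phi_i=\phi_j=0$ at the same instant) or a hard-killing explosion $\tau_\infty \le T$.

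To fit the framework of Section~\ref{section:non-attainability}, I would introduce the rescaled distance $\tilde\phi_j(t,e,z):=\phi_j(z)/\sqrt{f_j(t,e,z)}$, with $f_j$ extended by $c_0$ outside $[0,\infty)\times E_j\times D_j^{a_0}$ as in Hypothesis~\ref{hypothesis:jump-measure}.2. Hypothesis~\ref{hypothesis:particle-dynamics} makes $\tilde\phi_j$ of class $C^{1,2}$ near the boundary with uniformly bounded derivatives, so It\^o's formula applied to $\tilde\phi_j(t,e^j_t,Z^j_t)$, combined with~\eqref{chapitre4:EqHyThExDriftTerm} and the bounds $c_0<f_j+g_j<C_0$ and $|g_j|\le k_g\phi_j$, yields a non-negative semi-martingale whose continuous martingale part is uniformly elliptic and whose drift is uniformly bounded near the boundary. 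Hypothesis~\ref{hypothesis:jump-measure}.2 is engineered precisely so that at every hard-killing rebirth the coordinate $\tilde\phi_j$ is non-decreasing for every $j$. Hence, for any $i\neq j$, the pair $(\tilde\phi_i,\tilde\phi_j)$ is a pair of non-negative semi-martingales with positive jumps, ready to be fed to the non-attainability theorem.

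It remains to produce such a pair with $(\tilde\phi_i,\tilde\phi_j)\to(0,0)$ in finite time on the hard-killing explosion event. By pigeonhole, some particle $i$ is killed infinitely often in $[0,T]$. At each of its rebirth times $\tau$, Hypothesis~\ref{hypothesis:jump-measure}.1 gives, conditionally on the past, probability at least $p_0$ that the post-rebirth configuration lies in $A_i$, \emph{i.e.}\ some $j\neq i$ satisfies $\phi_j(X^j_\tau)\le h^{-1}(\phi_i(X^i_\tau))$. A conditional Borel--Cantelli argument at these rebirth stopping times then ensures that infinitely many of them fulfil this property almost surely on the explosion event. Since the inter-rebirth times of $i$ must tend to $0$ (their sum is bounded by $T$) and $\tilde\phi_i$ behaves between rebirths like a Brownian motion with bounded drift and non-degenerate quadratic variation, the post-rebirth values $\phi_i(X^i_\tau)$ are forced to $0$; by continuity of $h^{-1}$ at $0$, so does $\phi_j(X^j_\tau)$ along a subsequence. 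Extracting a limit value of $j$ (finitely many choices) yields a single pair $(i,j)$ whose rescaled distances reach $(0,0)$ in finite time with positive probability, contradicting the Section~\ref{section:non-attainability} theorem.

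The main difficulty I anticipate is the construction of the bad pair in the previous paragraph: converting the pointwise lower bound $p_0$ of Hypothesis~\ref{hypothesis:jump-measure}.1 into an almost-sure persistence statement, then using the elliptic non-degeneracy of Hypothesis~\ref{hypothesis:particle-dynamics} to translate "infinitely many rebirths in $[0,T]$'' into "post-rebirth distances tend to $0$''; this last step essentially requires comparing the hitting time of $\partial D_i$ from distance $r$ with the Brownian hitting time of order $r^2$. A secondary technical point is to verify that the transformed pair $(\tilde\phi_i,\tilde\phi_j)$ fulfils the precise assumptions of the Section~\ref{section:non-attainability} theorem, especially regarding jumps triggered by rebirths of particles other than $i,j$: these are also non-negative by Hypothesis~\ref{hypothesis:jump-measure}.2 and therefore do not break the positive-jump structure, but their interaction with the semi-martingale decomposition must be controlled.
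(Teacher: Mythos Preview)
Your proposal is correct and follows essentially the same architecture as the paper's proof: reduce away the soft killings, show that failure or hard-killing explosion forces some pair $(\phi_i(X^i),\phi_j(X^j))$ to approach $(0,0)$ in finite time (pigeonhole for $i_0$, a conditional Borel--Cantelli argument on the $p_0$ bound of Hypothesis~\ref{hypothesis:jump-measure}.1 for $j_0$), and then contradict this via the non-attainability result of Section~\ref{section:non-attainability}.

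Two minor differences are worth noting. First, the paper does \emph{not} rescale upfront: it feeds $Y^i=\phi_i(X^i)$ and $\pi^i=f_i$ directly into Proposition~\ref{PrHitting}, whose jump condition $Y^i_t/\sqrt{\pi^i_t}\ge Y^i_{t^-}/\sqrt{\pi^i_{t^-}}$ is exactly Hypothesis~\ref{hypothesis:jump-measure}.2, and whose Lyapunov functional $\Phi_t=-\tfrac12\log\big((Y^1_t)^2/\pi^1_t+(Y^2_t)^2/\pi^2_t\big)$ performs your rescaling internally; your choice $\tilde\phi_i=\phi_i/\sqrt{f_i}$ with $\pi^i\equiv 1$ is equivalent but shifts the It\^o calculus from Proposition~\ref{PrHitting} into the preparation step. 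Second, for the step ``post-rebirth values $\phi_{i_0}(X^{i_0}_{\tau_n})\to 0$'' the paper uses a cleaner argument than your anticipated hitting-time comparison: applying It\^o to a $C^2$ truncation $\gamma\circ\phi_{i_0}$ on $[\tau^{i_0}_n,\tau^{i_0}_{n+1})$, the continuous increments vanish because the interval lengths shrink and the integrands are bounded, the jump contributions are non-negative by Hypothesis~\ref{hypothesis:jump-measure}.2, and the left limit at $\tau^{i_0}_{n+1}$ equals $0$ since the particle hits $\partial D_{i_0}$ there---so $\gamma(\phi_{i_0}(X^{i_0}_{\tau^{i_0}_n}))\to 0$ with no ellipticity needed at this stage. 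The paper also inserts a localization by stopping times $(t_n)$ to stay inside $D_i^{a_0}\times D_j^{a_0}$ before invoking Proposition~\ref{PrHitting}; you will need the same device since $\tilde\phi_j$ is $C^{1,2}$ only there.
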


\bi \textit{Remark 8.} 
An other interesting example of diffusion processes with killing is given by reflected ones (see for instance~\cite{Kolb2010}
 and~\cite{Zhen2010}). Thus
 a natural question is whether our non-explosion result holds true when the diffusion process $(e^i,Z^i)$ is reflected on $\partial D_i$ and killed when its local time on $\partial D_i$ reaches the value
 of an independent exponential random variable. In fact, the only difference lies in the It\^o calculus developed in the end of this section, where a local time term should appear due to reflection.  The generality of Proposition~\ref{PrHitting} (which concludes the proof) allows to handle this additional term. As a consequence, our proof would remain perfectly valid under the setting of reflected diffusion processes.
 
 \begin{proof}[Proof of Theorem~\ref{ThExistence}]   Since the rate of killing $\kappa_i$ is assumed to be uniformly bounded for all $i\in\{1,...,N\}$, there is no accumulation of soft killing
  events almost surely. As a consequence, we only have to show the
  non-accumulation of hard killing events and can assume up to the end
  of the proof that $\kappa_i=0$ for all $i\in\{1,...,N\}$.

\bi
In Subsection~\ref{subsection:attainability}, we prove that, conditionally to $\tau_{stop}\wedge \tau_{\infty}<+\infty$, there exists $i\neq j\in\{1,\ldots,N\}$ such that the pair of semi-martingales $(\phi_i(X^i_{t}),\phi_j(X^j_{t}))$ converges
to $(0,0)$ when $t$ goes to $\tau_{stop} \wedge \tau_{\infty}$. 
This leads us to
  \begin{equation*}
    P\left(\tau_{stop}\wedge\tau_{\infty}<+\infty\right)\leq 
      \sum_{1\leq i < j \leq N} P\left(T^{ij}_0<+\infty\right),
  \end{equation*}
where $T_0^{ij}$ is the stopping time defined by
\begin{equation*}
  T_0^{ij}= \inf\left\lbrace t\in[0,+\infty[,\ \exists s_n\xrightarrow[n\rightarrow\infty]{}t\text{ such that }\lim_{n\rightarrow\infty}(\phi_i(X^i_{s_n}),\phi_j(X^j_{s_n}))=(0,0) \right\rbrace,
\end{equation*}
where $(s_n)$ runs over the set of non-decreasing sequences. 

\bi In a second step, we conclude the proof of Theorem~\ref{ThExistence} by proving that, for any $i\neq j$,
\begin{equation*}
P(T^{ij}_0<+\infty)=0.
\end{equation*}
The proof of this assertion is itself divided into two parts: in Subsection~\ref{subsection:decomposition}, we compute in details the It\^o's decomposition of $\phi_i(X^i)$ and $\phi_j(X^j)$; in Section~\ref{section:non-attainability}, we prove that the just obtained It\^o's decomposition implies the non-attainability of $(0,0)$ for $(\phi_i(X^i),\phi_j(X^j))$, that is $T_0^{ij}=+\infty$ almost surely, which concludes the proof.

\bi\textit{Remark 9.}
The non-attainability result used in the proof has its own interest and is thus stated as Proposition~\ref{PrHitting} in the independent Section~\ref{section:non-attainability}.

\subsection{Explosion of the number of rebirths implies simultaneous convergence to the boundary}
\label{subsection:attainability}
Let us prove that, under Hypotheses~\ref{hypothesis:particle-dynamics} and~\ref{hypothesis:jump-measure}
\begin{equation}
\label{equation:attainability}
    P\left(\tau_{stop}\wedge\tau_{\infty}<+\infty\right)\leq 
      \sum_{1\leq i < j \leq N} P\left(T^{ij}_0<+\infty\right).
\end{equation}

\bi  If $\tau_{stop}<+\infty$, then at least two particles
  $X^{i}$ and $X^{j}$ hit their respective boundaries at time
  $\tau_{stop}$ (by definition of this stopping time). It implies that $\phi_i(X^i_{\tau_{stop}\minus})=\phi_j(X^j_{\tau_{stop}\minus})=0$.
   We deduce that
  \begin{equation}
  \label{equation:tau-stop-implies-explosion}
  \left\lbrace \tau_{stop}<+\infty\right\rbrace\subset 
      \bigcup_{1\leq i < j \leq N} \left\lbrace T^{ij}_0<+\infty\right\rbrace.
  \end{equation}
Define
the event 
$
    {\cal E}=\{\tau_{\infty}<+\infty\text{ and }  \tau_{stop}=+\infty\}
$. It remains us to prove that, up to a negligible event,
\begin{equation}
\label{equation:e-included-in-attainability}
{\cal E}\subset \bigcup_{1\leq i < j \leq N} \left\lbrace T^{ij}_0<+\infty\right\rbrace.
\end{equation}

\me  Conditionally to ${\cal E}$, the total number of rebirths
  converges to $+\infty$ up to time $\tau_{\infty}$. Since there is only a
  finite number of particles, at least one of them, say $i_0$, is killed
  infinitely many times up to time $\tau_{\infty}$. We denote by $(\tau^{i_0}_n)_n$ the sequence of rebirth times of the particle $i_0$ (we thus have $\tau^{i_0}_n<\tau_{\infty}<\infty$ for all $n$). In a first step, we prove that $\phi_{i_0}(X^{i_0}_{\tau^{i_0}_n})$ converges to $0$ when $n\rightarrow\infty$. In a second step, we prove that there exists a random index $j_0\neq i_0$ such that $\phi_{j_0}(X^{j_0}_{\tau^{i_0}_n})$ converges to $0$ when $n\rightarrow\infty$. This will imply~\eqref{equation:e-included-in-attainability} and thus Inequality~\eqref{equation:attainability}.

\me \textit{Step 1: convergence of $\phi_{i_0}(X^{i_0}_{\tau^{i_0}_n})$ to $0$ when  $n\rightarrow\infty$.}\\
 We prove that, up to a negligible event,
\begin{equation}
\label{equation:tau-infty-implies-convergence-i-0}
{\cal E}\subset\left\lbrace\phi_{i_0}(X^{i_0}_{\tau_n^{i_0}})\xrightarrow[n\rightarrow{\infty}]{} 0\right\rbrace.
\end{equation} 
 It\^o's formula will be a very useful tool in our proof. However, since $\phi_{i_0}$ is of class $C^2$ only on the set $D^{a_0}_{i_0}$, one can not apply directly the It\^o's formula to $\phi_{i_0}(X^{i_0}_{\cdot})$ . In order to overcome this difficulty, let $\gamma:\R_+\mapsto\R_+$ be a $C^2$ function such that
  \begin{equation*}
  	\left\lbrace
  	\begin{array}{l}
  	\gamma\text{ is non-decreasing with uniformly bounded derivatives,}\\
  	\gamma(u)=u,\ \forall u\in[0,\frac{a_0}{4}[,\\
  	\gamma(u)=\frac{a_0}{2},\ \forall u\in[a_0,+\infty[.
  	\end{array}
  	\right.
  \end{equation*}
The function $\gamma\circ\phi_{i_0}$ is of class $C^2$ on the whole set $D_{i_0}$ with derivatives
\begin{equation*}
\frac{\partial \gamma\circ\phi_{i_0}}{\partial x_l}(x)=
\left\lbrace
\begin{array}{l}
\gamma'(\phi_{i_0}(x))\frac{\partial \phi_{i_0}}{\partial x_l}(x),\text{ if }x\in D^{a_0}_{i_0}\\
0\text{ otherwise,}
\end{array}
\right.
\end{equation*}
and
\begin{equation*}
\frac{\partial^2 \gamma\circ\phi_{i_0}}{\partial x_l\partial x_k}(x)=
\left\lbrace
\begin{array}{l}
\gamma''(\phi_{i_0}(x))\frac{\partial \phi_{i_0}}{\partial x_k}(x)+\gamma'(\phi_{i_0}(x))\frac{\partial^2 \phi_{i_0}}{\partial x_l\partial x_k}(x),\text{ if }x\in D^{a_0}_{i_0},\\
0,\text{ otherwise,}
\end{array}
\right.
\end{equation*}
where $k,l\in\{1,\ldots,d'_{i_0}\}$. By definition of $\gamma$ and by Hypothesis~\ref{hypothesis:particle-dynamics}, these derivatives are uniformly bounded over $D_{i_0}$ and one can apply the It\^o's formula to $\gamma\circ\phi_{i_0}(X^{i_0}_{\cdot})$.

\me  Applying It\^o's formula to $\gamma\circ\phi_{i_0}(X^{i_0}_{\cdot})$ between times $\tau^{i_0}_n$ and $\tau^{i_0}_{n+1}$, we obtain
\begin{equation*}
\begin{split}
\gamma\left(\phi_{i_0}(X^{i_0}_{\tau^{i_0}_{n+1}\minus})\right)-\gamma\left(\phi_{i_0}(X^{i_0}_{\tau^{i_0}_n})\right)&=\sum_{l=1}^{d'_{i_0}}\int_{\tau^{i_0}_n}^{\tau^{i_0}_{n+1}} \frac{\partial \gamma\circ \phi_{i_0}}{\partial x_l}(X^{i_0}_{t\minus})\left[\sigma_{i_0}(t\minus,o^{i_0}_{t\minus},X^{i_0}_{t\minus}) dB^{i_0}_t\right]_l\\
&+\frac{1}{2}\sum_{l,k=1}^{d'_{i_0}} \int_{\tau^{i_0}_n}^{\tau^{i_0}_{n+1}} \frac{\partial^2 \gamma\circ\phi_{i_0}}{\partial x_l\partial x_k}(X^{i_0}_{t\minus})
\left[\sigma_{i_0}\sigma_{i_0}^*\right]_{kl}(t\minus,o^{i_0}_{t\minus},X^{i_0}_{t\minus}) dt\\
&+\sum_{l=1}^{d'_{i_0}} \int_{\tau^{i_0}_n}^{\tau^{i_0}_{n+1}}\frac{\partial \gamma\circ\phi_{i_0}}{\partial x_l}(X^{i_0}_{t\minus})\left[\mu_{i_0}(t\minus,o^{i_0}_{t\minus},X^{i_0}_{t\minus})\right]_l dt\\
&+\sum_{\tau^{i_0}_n < t < \tau^{i_0}_{n+1}} \gamma\left(\phi_{i_0}(X^{i_0}_{t})\right)-\gamma\left(\phi_{i_0}(X^{i_0}_{t\minus})\right).
\end{split}
\end{equation*}
By Hypothesis~\ref{hypothesis:jump-measure}, $\phi_{i_0}(X^{i_0})$ can only have non-negative jumps. Since $\gamma$ is non-decreasing, we deduce that the last sum is non-negative. Since $\tau^{i_0}_{n+1}-\tau^{i_0}_n$ converges to $0$ and the derivatives of $\gamma\circ \phi_{i_0}$ are uniformly bounded, we also deduce that the integrals in the right term converge to $0$ when $n$ goes to $+\infty$. We thus have, conditionally to the event $\cal E$,
\begin{equation}
\label{equation:liminf}
\liminf_{n\rightarrow\infty} \gamma\left(\phi_{i_0}(X^{i_0}_{\tau^{i_0}_{n+1-}})\right)-\gamma\left(\phi_{i_0}(X^{i_0}_{\tau^{i_0}_n})\right) \geq 0.
\end{equation}
But $\tau^{i_0}_{n+1}$ is by definition a rebirth time for $X^{i_0}$ and thus $\phi_{i_0}(X^{i_0}_{\tau^{i_0}_{n+1}\minus})=0$ (we recall that we assumed without loss of generality that $\kappa_i=0$, so that all rebirths are due to hard killing). Since the continuous function $\gamma$ is equal to $0$ at $0$, we deduce that
\begin{equation*}
\gamma\left(\phi_{i_0}(X^{i_0}_{\tau^{i_0}_{n+1}-})\right)-\gamma\left(\phi_{i_0}(X^{i_0}_{\tau^{i_0}_n})\right)=-\gamma\left(\phi_{i_0}(X^{i_0}_{\tau^{i_0}_n})\right)\leq 0.
\end{equation*}
This and the limit~\eqref{equation:liminf} imply that
\begin{equation*}
\lim_{n\rightarrow\infty} \gamma\left(\phi_{i_0}(X^{i_0}_{\tau^{i_0}_n})\right)=0,
\end{equation*}
which implies~\eqref{equation:tau-infty-implies-convergence-i-0}.

\bi \textit{Step 2: there exists $j_0\neq i_0$ such that $\phi_{j_0}(X^{j_0}_{\tau^{i_0}_n})$ converges to $0$.}\\
We prove now that, up to a negligible event,
  \begin{equation}
  \label{equation:tau-infty-implies-j-0}
{\cal E}\subset\left\lbrace\exists j_0\neq i_0,\  \phi_{i_0}(X^{i_0}_{\tau^{i_0}_n})\geq h(\phi_{j_0}(X^{j_0}_{\tau^{i_0}_n})),\ \text{for
   infinitely many}\ n\geq 1\right\rbrace,
  \end{equation}  
where $h$ is the continuous function of Hypothesis~\ref{hypothesis:jump-measure}. We denote by ${\cal A}_n$ the event
  \begin{equation*}
    \label{s12_e7}
          {\cal A}_{n}=\left\lbrace \tau^{i_0}_n<+\infty\text{ and, }\forall j\neq i_0,\text{ }
          \phi_{i_0}(X^{i_0}_{\tau_n^{i_0}})<
          h(\phi_{j}(X^{j}_{\tau_n^{i_0}}))
          \right\rbrace.
  \end{equation*}
  We have,
  for all $1\leq k\leq l$,
  \begin{align*}
    P\left( \bigcap_{n=k}^{l+1}{{\cal A}_n} \right) &=E\left(E\left(
    \prod_{n=k}^{l+1}\mathds{1}_{{\cal A}_n}\,|\,(X^1_t,...,X^N_t)_{0\leq
    t<\tau_{l+1}^{i_0}} \right) \right)\\
    &=E\left(\prod_{n=k}^{l}\mathds{1}_{{\cal A}_n}
    E\left(\mathds{1}_{{\cal A}_{l+1}}\,|\,(X^1_t,...,X^N_t)_{0\leq
    t<\tau_{l+1}^{i_0}} \right) \right).
  \end{align*}
  By construction of the rebirth mechanism of the interacting particle
  system and by the first point of Hypothesis \ref{hypothesis:jump-measure},
  \begin{align*}
    E\left(\mathds{1}_{{\cal A}_{l+1}}\,|\,(X^1_t,...,X^N_t)_{0\leq
      t<\tau_{l+1}^{i_0}} \right) &=\1_{\tau^{i_0}_{l+1}<+\infty} {\cal
      H}(\tau^{i_0}_{l+1},\mathds{O}_{\tau_{l+1-}^{i_0}},\mathds{X}_{\tau_{l+1-}^{i_0}})\left(
    {A_{i_0}^c} \right)\\ &\leq 1-p_0,
  \end{align*}
  where $A_{i_0}$ and $p_0$ are defined in Hypothesis \ref{hypothesis:jump-measure}.
  By induction on $l$, we get
  \begin{equation*}
    P\left( \bigcap_{n=k}^{l}{{\cal A}_n} \right)\leq (1-p_0)^{l-k},\ \forall 1\leq k \leq l.
  \end{equation*}
  Since $p_0>0$, it yields that
  \begin{equation*}
    P\left( \bigcup_{k\geq 1}\bigcap_{n=k}^{\infty}{{\cal A}_n} \right)=0.
  \end{equation*}
  As a consequence, if $\cal E$ happens, then, for infinitely many rebirth times $\tau^{i_0}_n$, one
  can find a particle $j\neq i_0$ such that $\phi_{i_0}(X^{i_0}_{\tau^{i_0}_n})\geq
  h(\phi_{j}(X^{j}_{\tau^{i_0}_n}))$. Since there is only a finite number of
  other particles, one can find a particle, say $j_0$ (which is a random
  index), such that
  \begin{equation*}
{\cal E}\subset \left\lbrace\phi_{i_0}(X^{i_0}_{\tau^{i_0}_n})\geq h(\phi_{j_0}(X^{j_0}_{\tau^{i_0}_n})),\ \text{for
   infinitely many}\ n\geq 1\right\rbrace.
  \end{equation*}
Let us now conclude the proof of~\eqref{equation:e-included-in-attainability}.
We deduce from~\eqref{equation:tau-infty-implies-convergence-i-0} and Step~2 that
\begin{align*}
{\cal E}&\subset \left\lbrace h(\phi_{j_0}(X^{j_0}_{\tau^{i_0}_n}))\xrightarrow[n\rightarrow\infty]{} 0,\ \text{up to a subsequence}\right\rbrace,\\
&\subset \left\lbrace \phi_{j_0}(X^{j_0}_{\tau^{i_0}_n})\xrightarrow[n\rightarrow\infty]{} 0,\ \text{up to a subsequence}\right\rbrace.
\end{align*}
 Using again~\eqref{equation:tau-infty-implies-convergence-i-0}, we obtain that, up to a negligible event,
  \begin{align*}
  {\cal E}&\subset \left\lbrace\exists j_0\neq i_0,\  \lim_{n\rightarrow\infty}{(\phi_{i_0}(X^{i_0}_{\tau^{i_0}_n}),\phi_{j_0}(X^{j_0}_{\tau^{i_0}_n}))}=(0,0)\ \text{up to a subsequence}\right\rbrace\\
  &\subset \bigcup_{\substack{i,j=1\\ i\neq j}}^N\left\{ T_0^{ij}<+\infty \right\}.
  \end{align*}
This and~\eqref{equation:tau-stop-implies-explosion} clearly implies~\eqref{equation:attainability}.

\subsection{Non-convergence to $(0,0)$ for $(\phi_i(X^i),\phi_j(X^i))$}
\label{subsection:decomposition}
  \bi   Fix $i\neq j\in\{1,...,N\}$. In this subsection, we prove that
  \begin{equation}
  \label{equation:non-attainability-to-prove1}
  P\left(T_0^{ij}<+\infty\right)=0,
  \end{equation}
   which will conclude the proof of Theorem~\ref{ThExistence}.
  
  \me Let us introduce 
  the sequence of stopping times $(t_{n})_{n\in\mathbb{N}}$ , defined by
    \begin{equation*}
    t_0=\inf\{t\in [0,\tau_{stop}\wedge\tau_{\infty}[,\ \sqrt{\phi_i(X^i_t)^2+\phi_j(X^j_t)^2}\leq a_0/2\}
  \end{equation*}
  and, for all $n\geq 0$,
  \begin{equation*}
    \begin{split}
      t_{2n+1}&=\inf\{t\in [t_{2n},\tau_{stop}\wedge\tau_{\infty}[,\ \sqrt{\phi_i(X^i_t)^2+\phi_j(X^j_t)^2}\geq a_0\}\\
      t_{2n+2}&=\inf\{t\in [t_{2n+1},\tau_{stop}\wedge\tau_{\infty}[,\ \sqrt{\phi_i(X^i_t)^2+\phi_j(X^j_t)^2}\leq a_0/2\}.
    \end{split}
  \end{equation*}
By definition of the sequence $(t_n)_{n\geq 1}$, we clearly have
    \begin{equation*}
    \left\lbrace
    \begin{array}{l}
      \phi_i(X^i_t)< a_0\text{ and }\phi_j(X^j_t)<a_0,\ \forall t\in\bigcup_{n\geq 0}[t_{2n},t_{2n+1}[,\\
      \sqrt{\phi_i(X^i_t)^2+\phi_j(X^j_t)^2}\geq a_0/2 \text{ otherwise.}
    \end{array}
    \right.
  \end{equation*}
  In particular, $(\phi_i(X^i_t),\phi_j(X^j_t))$ cannot converge to $(0,0)$ during the interval of time $t\in [t_{2n+1},t_{2n+2}[$ and thus $T^{ij}_0\notin[t_{2n+1},t_{2n+2}[$ almost surely, for any $n\geq 0$. We deduce that
\begin{equation*}
    P\left(T_0^{ij}<+\infty\right)\leq \sum_{n=0}^{+\infty} P\left(T_0^{ij}\in[t_{2n},t_{2n+1}[\right)+P\left(T_0^{ij}\geq \lim_{n\rightarrow\infty}t_n\right).
  \end{equation*}
Our first step consists in proving that the sequence $(t_n)_{n\geq 0}$ converges to $+\infty$ almost surely. In a second step, we prove that $P\left(T_0^{ij}\in[t_{2n},t_{2n+1}[\right)=0$ for each $n\geq 0$.

\me \textit{Step 1: $(t_n)_{n\geq 0}$ converges to $+\infty$ almost surely.}\\
 Since $t_n$ is non-decreasing, it is clear that
\begin{align*}
\left\lbrace (t_n)_{n\geq 0}\text{ does not converge to }+\infty \right\rbrace&\subset \left\lbrace \sup_{n\geq 0} t_n <+\infty \right\rbrace
\\
&\subset\left\lbrace t_{2n+1}-t_{2n}\xrightarrow[n\rightarrow\infty]{} 0 \right\rbrace.
\end{align*}
Using the same It\^o's decomposition as in the first step of Subsection~\ref{subsection:attainability}, we obtain that, up to a negligible set,
\begin{equation*}
\left\lbrace t_{2n+1}-t_{2n}\xrightarrow[n\rightarrow\infty]{} 0 \right\rbrace\subset \left\lbrace\liminf_{n\rightarrow\infty}\gamma\circ\phi_k(X^k_{t_{2n+1}})-\gamma\circ\phi_k(X^k_{t_{2n}})\leq 0\right\rbrace\ \text{for }k=i,j.
\end{equation*}
Since $\gamma$ is non-decreasing, we deduce that, up to a negligible set,
\begin{equation*}
\left\lbrace t_{2n+1}-t_{2n}\xrightarrow[n\rightarrow\infty]{} 0 \right\rbrace\subset \left\lbrace\liminf_{n\rightarrow\infty}\phi_k(X^k_{t_{2n+1}})-\phi_k(X^k_{t_{2n}})\leq 0\right\rbrace\ \text{for }k=i,j.
\end{equation*}
We thus have
\begin{multline*}
\left\lbrace t_{2n+1}-t_{2n}\xrightarrow[n\rightarrow\infty]{} 0 \right\rbrace\\
\subset \left\lbrace\liminf_{n\rightarrow\infty} 
\sqrt{\phi_i(X^i_{t_{2n+1}})^2+\phi_j(X^j_{t_{2n+1}})^2}-\sqrt{\phi_i(X^i_{t_{2n}})^2+\phi_j(X^j_{t_{2n}})^2}\leq 0
\right\rbrace.
\end{multline*}
But the right continuity of the particle system, the continuity of $\phi_i$ and $\phi_j$ and the definition of the sequence $(t_{n})$ imply that
\begin{multline*}
\left\lbrace
\sup_{n\geq 1}{t_n}<+\infty
\right\rbrace\\
\subset
\left\lbrace
\sqrt{\phi_i(X^i_{t_{2n+1}})^2+\phi_j(X^j_{t_{2n+1}})^2}-\sqrt{\phi_i(X^i_{t_{2n}})^2+\phi_j(X^j_{t_{2n}})^2}\geq a_0/2,\ \forall n\geq 1
\right\rbrace.
\end{multline*}
Finally, we deduce that $\left\lbrace (t_n)_{n\geq 0}\text{ does not converge to }+\infty \right\rbrace$ is included in a negligible set, so that $(t_n)_{n\geq 1}$ converges to $+\infty$ almost surely.

\me \textit{Step 2: we have $P\left(T_0^{ij}\in[t_{2n},t_{2n+1}[\right)=0$ for any $n\geq 0$.}\\
  Fix  $n\geq 0$.
  We define the
  positive semi-martingale $Y^i$  by
  \begin{equation}
    \label{EqThExYdecomp}
    Y^i_t=\left\lbrace
    \begin{array}{l}
      \phi_i(X^i_{t_{2n}+t}) \text{ if } t<t_{2n+1}-t_{2n},\\
      a_0\vee \frac{c_0}{k_0} \text{ if } t\geq t_{2n+1}-t_{2n}.
    \end{array}
    \right.
  \end{equation}
We also define the local martingale $M^i$ and the adapted process $b^i$ by
\begin{equation*}
M^i_t=\sum_{l=1}^{d'_{i}} \int_0^{t\wedge (t_{2n+1}-t_{2n})} \frac{\partial  \phi_{i}}{\partial x_l}(X^{i}_{t_{2n}+s})\left[\sigma_{i}(t_{2n}+s,o^{i}_{t_{2n}+s},X^{i}_{t_{2n}+s}) dB^{i}_{t_{2n}+s}\right]_l,
\end{equation*}
and
\begin{multline*}
b^i_t=
\1_{t\in[0,t_{2n+1}-t_{2n}[}\left[\sum_{l=1}^{d'_{i}}  \frac{\partial \phi_{i}}{\partial x_l}(X^{i}_{t_{2n}+t})\left[\mu_{i}\right]_l(t_{2n}+t,o^{i}_{t_{2n}+t},X^{i}_{t_{2n}+t})\right.\\
\left.+\frac{1}{2}\sum_{l,k=1}^{d'_{i}}\frac{\partial^2 \phi_{i}}{\partial x_l\partial x_k}(X^{i}_{t_{2n}+t})
\left[\sigma_{i}\sigma_{i}^*\right]_{kl}(t,o^{i}_{t_{2n}+t},X^{i}_{t_{2n}+t})\right].
\end{multline*}
With these definitions, we deduce from the It\^o's formula that
\begin{equation}
\label{equation:Yi-decomposition-one}
dY^i_t=dM^i_t+b^i_tdt+Y^i_t-Y^i_{t\minus}.
\end{equation}
We define the adapted processes $\pi^i$ and $\rho^i$ by
  \begin{equation*}
    \pi^i_t=
    \begin{cases}
      f_i({t_{2n}+t},o^i_{t_{2n}+t},X^i_{t_{2n}+t}),&\text{if } t<t_{2n+1}-t_{2n},\\
      c_0,& \text{if } t\geq t_{2n+1}-t_{2n}
    \end{cases}
    \end{equation*}
    and
    \begin{equation*}
    \rho^i_t=
    \begin{cases}
      g_i({t_{2n}+t},o^i_{t_{2n}+t},X^i_{t_{2n}+t}),&\text{if } t<t_{2n+1}-t_{2n},\\
      -c_0,&\text{if } t\geq t_{2n+1}-t_{2n},
    \end{cases}
  \end{equation*}
  where $f_i$ and $g_i$ are given by Hypothesis \ref{hypothesis:particle-dynamics}.
An immediate computation leads us to
  \begin{equation}
  \label{equation:Mi-square-brackets-decomposition}
    d\langle M^i \rangle_t=(\pi^i_t+\rho^i_t)dt,\ \forall t\geq 0.
  \end{equation}

\me The process $\pi^i$ is a semi-martingale.     We deduce from the It\^o's formula and from the regularity of $f_i$ over $[0,+\infty[\times E_i\times D^{a_0}_i$ that there exist a local
  martingale $N^i$ and a finite variational process $L^i$ such that, for all $t\geq 0$,
   \begin{equation}
   \label{equation:PIi-decomposition}
    d\pi^i_t=dN^i_t+ dL^i_t + \pi^i_t-\pi^i_{t\minus}.
  \end{equation}
   We define the adapted process $\xi^i$ by
   \begin{align*}
     \xi^i_t=\1_{t\in[0,t_{2n+1}-t_{2n}[}\left(\sum_{k=1,l}^{d_i}\frac{\partial f_i}{\partial e_k}\frac{\partial f_i}{\partial e_l}
                                            [s_i s_i^*]_{kl}
          +\sum_{k=1,l}^{d'_i}\frac{\partial f_i}{\partial x_k}\frac{\partial f_i}{\partial x_l}
                                            [\sigma_i \sigma_i^*]_{kl}\right),
   \end{align*}
   where each function involved is implicitly evaluated at $({t_{2n}+t},o^i_{t_{2n}+t},X^i_{t_{2n}+t})$.
We thus obtain
      \begin{equation}
      \label{equation:Ni-decomposition}
     \langle N^i\rangle_t= \xi^i_t dt.
   \end{equation}
Similarly, we define the processes $Y^j$, $M^j$, $b^j$, $\pi^j$, $\rho^j$, $N^j$, $L^j$ and $\xi^j$.

\me By Hypothesis~\ref{hypothesis:jump-measure} and by the definition of $Y^i$ and $Y^j$, it is clear that
\begin{equation}
\label{equation:properties-set-1}
\frac{Y^i_{t}}{\sqrt{\pi^i_t}}-\frac{Y^i_{t\minus}}{\sqrt{\pi^i_{t\minus}}}\geq 0\text{ and }\frac{Y^j_{t}}{\sqrt{\pi^j_t}}-\frac{Y^j_{t\minus}}{\sqrt{\pi^j_{t\minus}}}\geq 0.
\end{equation}
Moreover, the uniform bound assumptions and the regularity assumptions of Hypothesis~\ref{hypothesis:particle-dynamics} imply that the processes $b^i$, $b^j$, $\pi^i$, $\pi^j$, $\rho^i$, $\rho^j$, $\xi^i$ and $\xi^j$ are uniformly bounded. In particular, there exist some positive constants $b_{\infty}$, $C_{\pi}$ and $C_{\xi}$ such that, for any $t\geq 0$,
\begin{equation}
\label{equation:properties-set-2}
\left\lbrace
\begin{aligned}
b^i_t&\geq -b_{\infty}\text{ and }b^j_t\geq -b_{\infty},\\
\pi^i_t&\leq \pi^i_t+|\rho^i_t|\leq C_{\pi}\text{ and }\pi^j_t\leq \pi^j_t+|\rho^j_t|\leq C_{\pi},\\
\xi^i_t&\leq C_{\xi}\text{ and }\xi^j_t\leq C_{\xi}.
\end{aligned}
\right.
\end{equation}
Setting $c_{\pi}=c_0$ and $k_0=k_g\vee c_0 /a_0$, we deduce from the fourth point of Hypothesis~\ref{hypothesis:particle-dynamics} and the definition of $\pi^i$, $\pi^j$, $\rho^i$ and $\rho^j$ that, for any $t\geq 0$,
\begin{equation}
\label{equation:properties-set-3}
\left\lbrace
\begin{aligned}
c_{\pi}&\leq \pi^i_t\text{ and }c_{\pi}\leq \pi^j_t,\\
|\rho^i_t|&\leq k_0 Y^i_t\text{ and }|\rho^j_t|\leq k_0 Y^j_t.
\end{aligned}
\right.
\end{equation}
By the independence of the particles between the rebirths, we also deduce that, for any $t\geq 0$,
\begin{equation}
\label{equation:properties-set-4}
\langle M^i,M^j\rangle_t=0.
\end{equation}

\bi    We claim now  that the decompositions~\eqref{equation:Yi-decomposition-one}, \eqref{equation:Mi-square-brackets-decomposition}, \eqref{equation:PIi-decomposition} and \eqref{equation:Ni-decomposition} of $Y^i$, $Y^j$, $\langle M^i\rangle$, $\langle M^j\rangle$, $\pi^i$,
    $\pi^j$, $\langle N^i\rangle$ and $\langle N^j\rangle$,  together with the properties \eqref{equation:properties-set-1},
    \eqref{equation:properties-set-2}, \eqref{equation:properties-set-3} and 
    \eqref{equation:properties-set-4}, imply that $(Y^1,Y^2)$ never converges to
    $(0,0)$ almost surely. This is proved in the next section, where a general 
    criterion for non-attainability of $(0,0)$ for non-negative semi-martingales with positive jumps is
    stated (see Proposition~\ref{PrHitting}).  
    
    \me Finally,
    we deduce that $T^{ij}_0\notin [t_{2n},t_{2n+1}[$ almost surely,
        for all $n\geq 0$, concluding the proof of Theorem~\ref{ThExistence}.

 \end{proof}
 
 \section{Non-attainability of $(0,0)$ for semi-martingales}
\label{SeAttainability}
\label{section:non-attainability}

Fix $T>0$ and let $(Y^i_t)_{t\in[0,T]}$, $i=1,2$, be two uniformly bounded non-negative
one-dimensional semi-martingales. This means that there exists a constant $y_{\infty}>0$ such that 
\begin{equation}
0\leq Y^i_t \leq y_{\infty},\ \forall t\geq 0\text{ and }i\in\{1,2\}\text{  almost surely}
\end{equation}
and that
 there exist a continuous local martingale $(M^i_t)_{t\in[0,T]}$ and a continuous finite variational process $I$ such that
\begin{equation}
dY^i_t=dM^i_t+dI^i_t+Y^i_t-Y^i_{t\minus},\ \forall t\geq 0\text{ and }i\in\{1,2\}\text{  almost surely}.
\end{equation}
In this section, we give a sufficient condition for $(Y^1,Y^2)$ not to converge to $(0,0)$ up to
time $T$. More precisely, we set
\begin{equation*}
T_0=\inf\left\lbrace t\in[0,+\infty[,\ \exists s_n\xrightarrow[n\rightarrow\infty]{}t\text{ such that }\lim_{n\rightarrow\infty}(Y^1_{s_n},Y^2_{s_n})=(0,0) \right\rbrace,
\end{equation*}
where $(s_n)$ runs over the set of non-decreasing sequences, and give a sufficient criterion for 
$
T_0=+\infty\text{ almost surely.}
$

\me
 Criteria for non-attainability of $(0,0)$ already exist for time homogeneous stochastic
differential equations (we refer the reader to the papers of Friedman~\cite{Friedman1974},
 Ramasubramanian~\cite{Ramasubramanian1988} and  Delarue~\cite{Delarue2008}). Our result is a generalization, since we do not assume at all that $Y^i$ is given by a stochastic differential equation, nor that it has any time-homogeneity property. In particular, it is not required for $Y^i$ to be a Markov process, which is of first importance in view of our main application (the processes $\phi_i(X^i_t)$ in Section~3 do not fulfil the Markov property). Our proofs are inspired by the recent work of
Delarue~\cite{Delarue2008}, who obtains lower and higher bound for
the hitting time of a corner for a diffusion driven by a time
homogeneous stochastic differential equations reflected in the square.

\me Here is our main assumption.
\begin{hypothesis}
  \label{HyPrHitting}
  For each $i=1,2$, there exist an adapted process $b^i$ and a non-decreasing adapted process $K^i_t$ such that $$dI^i_t=b^i_t dt +dK^i_t.$$ Moreover,  
  there exists a non-negative semi-martingale
  $\pi^i$, whose decomposition is
  \begin{equation*}
    d\pi^i_t=d{N^i_t}+dL^i_t+\pi^i_t-\pi^i_{t\minus},
  \end{equation*}
  where $N^i$ is a continuous local martingale and $L^i$ is a
  continuous finite variational adapted process, and there exist two adapted
  processes $\rho^i_t$ and $\xi^i_t$ and some positive constants
  $b_{\infty},k_0,c_{\pi},C_{\pi}, C_{\xi}$ such that, almost surely,
  \begin{enumerate}
  \item $d\left\langle M^i\right\rangle_t=(\pi^i_{t}+\rho^i_{t}) dt$ and $d\left\langle N^i\right\rangle_t=\xi^i_{t} dt$,
  \item $c_{\pi}\leq \pi^i_t+\rho^i_t\leq
    C_{\pi}$, $|\rho^i_t|\leq k_0 Y^i_t$, $\xi_t \leq C_{\xi}$ and $b^i_t\geq -b_{\infty}$ for
    all $t\in[0,T]$
  \item $\left\langle M^1,M^2\right\rangle$  is a non-increasing process.
  \item we assume that, for any time $t\geq 0$,
    \begin{equation*}
     \frac{Y^i_t}{\sqrt{\pi^i_t}}-\frac{Y^i_{t\minus}}{\sqrt{\pi^i_{t\minus}}}\geq 0.
   \end{equation*}
  \end{enumerate}
\end{hypothesis}

\me\textit{Remark 10.} The third point of Hypothesis \ref{HyPrHitting} has the following
geometrical interpretation: when an increment of $M^1$ is non-positive
(that is when $M^1$ goes closer to $0$), the increment of $M^2$ is
non-negative (so that $M^2$ goes farther from $0$), as a consequence
$(M^1,M^2)$ remains away from $0$. A nice graphic representation of
a very similar phenomenon is given by Delarue's \cite[Figure 1]{Delarue2008}.

\me We define the adapted process $\Phi$ by
\begin{equation}
  \label{EqThAttainabilityPhitDef}
  \Phi_t\stackrel{def}{=}-\frac{1}{2}\log\left(\frac{(Y^1_t)^2}{\pi^1_t}+\frac{(Y^2_t)^2}{\pi^2_t}\right).
\end{equation}
For all $\epsilon>0$, we also
define the stopping time $T_{\epsilon}=\inf\{t\in[0,T],\ \Phi_t\geq
{\epsilon}^{-1}\}$.
Since $\pi^i_t$
is uniformly bounded below by $c_{\pi}$, $\Phi_t$ goes to infinity when $(Y^1_t,Y^2_t)$ goes to $(0,0)$, so that
\begin{equation*}
  T_0=\lim_{\epsilon\rightarrow 0} T_{\epsilon}\ \text{almost surely.}
\end{equation*}

\me We are now able to state our non-attainability result.
\begin{proposition}
  \label{PrHitting}
  If Hypothesis~\ref{HyPrHitting} is fulfilled, then $T_0=+\infty$ almost surely. In particular,
  $(Y^1,Y^2)$ doesn't converge to $(0,0)$ in finite time almost surely.
  Moreover, there exists a positive constant $C$ which only
  depends on $b_{\infty},k_0,c_{\pi},C_{\pi}, C_{\xi}$ such that, for
  all $\epsilon^{-1}>\Phi_0$ and any stopping time $T$,
  \begin{equation*}
    P\left(T_{\epsilon}\leq T \right)\leq
    \frac{1}{\epsilon^{-1}-\Phi_0} C\left(E(|L^1|_{T}+|L^2|_{T}+ T)\right),
  \end{equation*}
  where $|L^i|_T$ is the total variation of $L^i$ at time $T$ and
  $\Phi_0$ is defined in \eqref{EqThAttainabilityPhitDef}.
\end{proposition}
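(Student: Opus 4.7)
\bi\textit{Proof proposal.}

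The plan is to apply It\^o's formula to $\Phi_t=-\tfrac{1}{2}\log R_t$ with $R_t=(Y^1_t)^2/\pi^1_t+(Y^2_t)^2/\pi^2_t$, and obtain a decomposition
\begin{equation*}
\Phi_t-\Phi_0=M^\Phi_t+A^\Phi_t+J^\Phi_t
\end{equation*}
in which $M^\Phi$ is a local martingale, $J^\Phi_t=\sum_{s\leq t}\Delta\Phi_s$ collects only non-positive jumps, and $A^\Phi$ is a finite-variation drift whose expected increment up to the stopping time $T_\epsilon\wedge T$ is bounded by $CE[T+|L^1|_T+|L^2|_T]$ for a constant $C$ depending only on $b_\infty,k_0,c_\pi,C_\pi,C_\xi$. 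Once this decomposition is established, optional stopping at $T_\epsilon\wedge T$ combined with $\Phi_{T_\epsilon}\geq\epsilon^{-1}$ on $\{T_\epsilon\leq T\}$ yields the stated quantitative bound, and letting $\epsilon\downarrow 0$ gives $P(T_0<+\infty)=0$ since $T_0=\lim_\epsilon T_\epsilon$.

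The non-positivity of the jumps is the easy step. Writing $U^i_t=Y^i_t/\sqrt{\pi^i_t}\geq 0$, so that $R_t=(U^1_t)^2+(U^2_t)^2$, point~4 of Hypothesis~\ref{HyPrHitting} gives $\Delta U^i_t\geq 0$, and since $U^i_t+U^i_{t-}\geq 0$ we deduce $\Delta R_t=\sum_i(U^i_t+U^i_{t-})\Delta U^i_t\geq 0$, hence $\Delta\Phi_t\leq 0$.

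The heart of the argument is the It\^o expansion $d\Phi^c_t=-\frac{dR^c_t}{2R_t}+\frac{d\langle R\rangle^c_t}{4R_t^2}$, computed by applying It\^o's formula to $(y,p)\mapsto y^2/p$ for each pair $(Y^i,\pi^i)$ and using the decompositions of $Y^i$ and $\pi^i$ provided in Hypothesis~\ref{HyPrHitting}. The dominant $dt$-coefficient of the drift of $dR^c$ is $\sum_i d\langle M^i\rangle/\pi^i=2\,dt$ up to an $O(\rho^i/\pi^i)$ correction, and contributes $-dt/R_t$ to $d\Phi^c$; the dominant term of $d\langle R\rangle^c$ is $\sum_i 4(Y^i)^2 d\langle M^i\rangle/(\pi^i)^2\approx 4R_t\,dt$, contributing $+dt/R_t$. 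These two singular contributions cancel exactly --- the two-dimensional analogue of the classical local-martingale property of $\log|B|$ for planar Brownian motion. The residual drift then splits into four categories: $(i)$ a non-positive $dK^i$ contribution (since $K^i$ is non-decreasing and $Y^i,\pi^i>0$); $(ii)$ a non-positive contribution from $d\langle M^1,M^2\rangle$ by point~3 of Hypothesis~\ref{HyPrHitting}; $(iii)$ terms in $b^i$, $\rho^i$, $\xi^i$, and the cross-variation $d\langle M^i,N^i\rangle$ (the last controlled via Kunita--Watanabe by $\sqrt{C_\pi C_\xi}\,dt$), which after using $|\rho^i|\leq k_0 Y^i$, $Y^i\leq\sqrt{C_\pi R_t}$ and the uniform bounds on $\pi^i$, $\rho^i$, $\xi^i$, $b^i$, are absorbed into a controlled contribution; and $(iv)$ $dL^i$-terms bounded by a constant multiple of $d|L^i|_t$.

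With the decomposition in hand, a localisation of $M^\Phi$ together with the deterministic lower bound $\Phi_t\geq-\tfrac{1}{2}\log(2y_\infty^2/c_\pi)$ allows one to pass to the true supermartingale inequality for $\Phi_t-A^\Phi_t$, yielding $E[\Phi_{T_\epsilon\wedge T}-\Phi_0]\leq CE[T+|L^1|_T+|L^2|_T]$. A Markov-type estimate at the level $\epsilon^{-1}-\Phi_0$, combined with $\Phi_{T_\epsilon\wedge T}\geq\epsilon^{-1}$ on $\{T_\epsilon\leq T\}$, then gives the stated bound on $P(T_\epsilon\leq T)$. The main obstacle will be carrying out the explicit It\^o expansion carefully enough to verify the $\pm dt/R_t$ cancellation unambiguously and to track that every residual term is controlled solely by the five constants of Hypothesis~\ref{HyPrHitting}; in particular, the cross-variations $d\langle M^i,N^i\rangle$ must be handled entirely via Kunita--Watanabe since no sign information is available, and the $b^i$-driven terms require delicate bookkeeping to ensure that the one-sided bound $b^i\geq -b_\infty$ suffices.
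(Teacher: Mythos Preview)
Your plan is on the right track and the first cancellation you identify is exactly the one the paper uses: the $-dt/R_t$ coming from $-\tfrac{1}{2R}\sum_i d\langle M^i\rangle/\pi^i$ is indeed killed by the $+dt/R_t$ coming from $\tfrac{1}{4R^2}\sum_i 4(Y^i)^2 d\langle M^i\rangle/(\pi^i)^2$. The jump analysis, the $dK^i$ sign, and the $d\langle M^1,M^2\rangle$ sign are also correct.

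The gap is in your category~(iii). After the $\pm dt/R_t$ cancellation, the residual drift of $\Phi$ is \emph{not} bounded by a constant: it is only $O(e^{\Phi_t})\,dt = O(R_t^{-1/2})\,dt$. For instance the $b^i$-contribution to $d\Phi$ is $-\sum_i \tfrac{Y^i}{\pi^i R_t}\,b^i\,dt$, and since $Y^i\leq\sqrt{\pi^i R_t}$ this is bounded by $\tfrac{2b_\infty}{\sqrt{c_\pi}}\,e^{\Phi_t}\,dt$, not by $C\,dt$. The $\rho^i$ and $\langle M^i,N^j\rangle$ terms behave the same way. So the inequality $E[\Phi_{T_\epsilon\wedge T}-\Phi_0]\leq C\,E[T+|L^1|_T+|L^2|_T]$ with $C$ independent of $\epsilon$ is not available from your decomposition; the best you can get on $[0,T_\epsilon]$ is a constant of order $e^{1/\epsilon}$, and then the Markov step produces $e^{1/\epsilon}/(\epsilon^{-1}-\Phi_0)\to\infty$.

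The paper fixes this by composing with a carefully chosen increasing concave function $F(r)=\int_0^r\exp(C_F e^{-s})\,ds$, whose key property is $F''(r)=-C_F e^{-r}F'(r)$. Applying It\^o to $F(\Phi)$ produces, in addition to $F'(\Phi)\,d\Phi$, the second-order term $-\tfrac{C_F}{2}e^{-\Phi}F'(\Phi)\,d\langle\Phi\rangle$. Since $d\langle\Phi\rangle\geq \tfrac{c_\pi}{C_\pi}e^{2\Phi}\,dt$ up to bounded errors, this second-order term contributes a \emph{negative} drift of order $e^{\Phi}F'(\Phi)\,dt$, and choosing $C_F$ large enough makes it dominate the positive $O(e^\Phi F'(\Phi))\,dt$ residual you left uncontrolled. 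What remains is then genuinely bounded by $C(|L^1|_t+|L^2|_t+t)$, and since $F'\geq 1$ the Markov argument goes through for $F(\Phi)$ just as it would for $\Phi$. In short: your $\log$-transform cancels the leading singularity but not the subleading one; a second, concave, transform is needed to absorb the $R^{-1/2}$ remainder.
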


\begin{proof}[Proof of Proposition \ref{PrHitting}:]
Since $T_{\epsilon}$ converges to $T_0$ when $\epsilon\rightarrow 0$, if the last part of Proposition~\ref{PrHitting} is fulfilled, then $P(T_0\leq T)=0$ for any deterministic time $T$ and thus  $T_0=+\infty$ almost surely. As a consequence, we only have to prove the second part of the proposition, for a given stopping time $T$ and a fixed value of $\epsilon>0$.

\me The proof is divided into several steps and is organised as follows. We assume that $\langle M^1,M^2\rangle=0$ in the three first steps and consider the general case in the last one. In Step~1, we compute in detail the It\^o's decomposition of the semi-martingale $\Phi$. In Step~2, we introduce the function
 $F:\mathds{R}\mapsto \mathds{R}$ defined by
  \begin{equation*}
    F(r)=\int_0^r \exp\left( C_F e^{-s} \right)ds,
  \end{equation*}
  where $C_F>0$ is a positive constant that will be fixed later in the
  proof. Using It\^o's formula, we prove that there exists a local martingale $H$ and a positive constant $C>0$ such that
  \begin{equation}
  \label{eq:for-step2-only}
    F(\Phi_t)-F(\Phi_0)\leq H_t 
    + C\left(|L^1|_{t}+|L^2|_{t}+t\right).
  \end{equation}
In Step~3, we conclude the proof of Proposition~\ref{PrHitting} in the particular case $\langle M^1,M^2\rangle=0$. In Step~4, we prove that the result remains true in the general case.

\bi \textit{Step 1. It\^o's decomposition of the semi-martingale $(\Phi_t)_{t\in[0,T_{\epsilon}[}$.}\\
Let us introduce the $C^2$ function $\Gamma$ defined by
  \begin{equation*}
    \begin{array}{lcl}
      \Gamma:& \mathds{R}_+^*\times \mathds{R}_+^* \times
      \mathds{R}_+ \times \mathds{R}_+&\rightarrow \mathds{R}\\
       &(\alpha_1,\alpha_2,x_1,x_2)&\mapsto\displaystyle
    -\frac{1}{2}\log\left(\frac{x_1^2}{\alpha_1}+\frac{x_2^2}{\alpha_2}\right),
     \end{array}
  \end{equation*}
so that $\Phi_t=\Gamma(\pi^1_t,\pi^2_t,Y^1_t,Y^2_t)$. The successive derivatives of the function $\Gamma$ are
  \begin{equation*}
  \begin{split}
    \frac{\partial \Gamma}{\partial x_i}&=-\alpha_i^{-1}x_i e^{2\Gamma},\ 
    \frac{\partial^2 \Gamma}{\partial x_i^2}= -\alpha_i^{-1}e^{2\Gamma} + 2 \alpha_i^{-2}x_i^2 e^{4\Gamma},\\
    \frac{\partial \Gamma}{\partial \alpha_i}&= \frac{1}{2}\alpha_i^{-2}x_i^2 e^{2\Gamma},\ 
    \frac{\partial^2 \Gamma}{\partial \alpha_i^2}= -\alpha_i^{-3}x_i^2 e^{2\Gamma}+\frac{1}{2}\alpha_i^{-4}x_i^4 e^{4\Gamma},\\
    \frac{\partial^2 \Gamma}{\partial x_i \partial\alpha_i}&=\alpha_i^{-2}x_i e^{2\Gamma}-\alpha_i^{-3}x_i^3 e^{4\Gamma},\ 
    \frac{\partial^2 \Gamma}{\partial x_i \partial \alpha_j}=-\alpha_i^{-1}\alpha_j^{-2}x_i x_j^2 e^{4\Gamma},\\
    \frac{\partial^2\Gamma}{\partial x_i\partial x_j}&=2\alpha_i^{-1}\alpha_j^{-1}x_i x_j e^{4\Gamma},\ 
    \frac{\partial^2 \Gamma}{\partial \alpha_i\partial \alpha_j}=\frac{1}{2}\alpha_i^{-2}\alpha_j^{-2}x_i^2 x_j^2 e^{4\Gamma}.
    \end{split}
  \end{equation*}
  In particular, we have
  \begin{equation*}
  \begin{split}
  \sum_{i=1,2}\frac{\partial^2 \Gamma}{\partial x_i^2}(\alpha_1,\alpha_2,x_1,x_2)\alpha_i&=\sum_{i=1,2}-\alpha_i^{-1}e^{2\Gamma}\alpha_i + 2 \alpha_i^{-2}x_i^2 e^{4\Gamma}\alpha_i\\
  &=-2e^{2\Gamma}+2e^{4\Gamma}\left(\frac{x_1^2}{\alpha_1}+\frac{x_2^2}{\alpha_2}\right)\\
  &=-2e^{2\Gamma}+2e^{4\Gamma}e^{-2\Gamma}=0,
  \end{split}
  \end{equation*}
  so that, for any $t\in [0,T_{\epsilon}[$,
  \begin{equation*}
    \sum_{i=1,2}\frac{\partial^2 \Gamma}{\partial x_i^2}(\pi^1_{t},\pi^2_{t},Y^1_{t},Y^2_{t})\pi^i_{t}=0,\text{ almost surely.}
  \end{equation*}
    Using the previous equalities and the It\^o's formula, we get
  \begin{equation}
    \label{EqPrHittingDphi}
    \begin{split}
      d\Phi_t= &-\sum_{i=1,2} \frac{Y^i_{t}}{\pi^i_{t}}
      e^{2\Phi_{t}}dM^i_t
      +\sum_{i=1,2}\frac{(Y^i_{t})^2}{2(\pi^i_{t})^{2}}
      e^{2\Phi_{t}} d{N^i_t}
      -\sum_{i=1,2} \frac{Y^i_{t}}{\pi^i_{t}} e^{2\Phi_{t}} dK^i_t\\
      &- \sum_{i=1,2} \frac{Y^i_{t}}{\pi^i_{t}}
      e^{2\Phi_{t}} b^i_t dt
      + \sum_{i=1,2}\frac{(Y^i_{t})^2}{2(\pi^i_{t})^{2}} e^{2\Phi_{t}}dL^i_t\\
      &+\frac{1}{2}\sum_{i=1,2}\left(-\frac{1}{\pi^i_{t}}e^{2\Phi_{t}} 
        + 2 \frac{(Y^i_{t})^2}{(\pi^i_{t})^{2}} e^{4\Phi_{t}}\right)\rho^i_{t} dt
        +\frac{1}{2}\sum_{i\neq
        j\in\{1,2\}}\frac{2Y^i_t Y^j_t}{\pi^i_t\pi^j_t}e^{4\Phi_t}d\langle M^i,M^j\rangle_t\\
      &+\frac{1}{2}\sum_{i=1,2}\left(-\frac{(Y^i_{t})^2}{(\pi^i_{t})^{3}} e^{2\Phi_{t}}
        +\frac{(Y^i_{t})^4}{2(\pi^i_{t})^{4}} e^{4\Phi_{t}}\right)d\left\langle N^i\right\rangle_t      +\frac{1}{2}\sum_{i\neq
        j\in\{1,2\}}\frac{(Y^i_t)^2(Y^j_t)^2}{2(\pi^i_t)^2(\pi^j_t)^2}e^{4\Phi_t}d\langle N^i,N^j\rangle_t\\
      &+\frac{1}{2}\sum_{i=1,2}\left(\frac{Y^i_{t}}{(\pi^i_{t})^2} e^{2\Phi_{t}}
        -\frac{(Y^i_{t})^3}{(\pi^i_{t})^{3}} e^{4\Phi_{t}}\right)d\left\langle M^i,N^i\right\rangle_t
        -\frac{1}{2}\sum_{i\neq
        j\in\{1,2\}}\frac{Y^i_{t}(Y^j_{t})^2}{\pi^i_{t}(\pi^j_{t})^2}
      e^{4\Phi_{t}}d\left\langle
        M^i,N^j\right\rangle_t\\
        &+\Phi_t-\Phi_{t\minus}
    \end{split}
  \end{equation}
  and
  \begin{equation*}
    \begin{split}
      d\left\langle \Phi\right\rangle_t=&\sum_{i=1,2}
      \frac{(Y^i_{t})^2}{(\pi^i_{t})^2} e^{4\Phi_{t}}(\rho^i_{t}+\pi^i_{t})dt
      +\sum_{i=1,2}\frac{(Y^i_{t})^4}{4(\pi^i_{t})^{4}} e^{4\Phi_{t}} d\left\langle N^i\right\rangle_t\\
      & \sum_{i\neq j\in\{1,2\}}\frac{Y^i_t Y^j_t}{\pi^i_t\pi^j_t}e^{4\Phi_t}d\langle M^i,M^j\rangle_t+ \sum_{i\neq j\in\{1,2\}} \frac{(Y^i_t)^2(Y^j_t)^2}{4(\pi^i_t)^2(\pi^j_t)^2}e^{4\Phi_t}d\langle N^i,N^j\rangle_t \\
      &-\sum_{i=1,2} \frac{(Y^i_{t})^3}{2(\pi^i_{t})^{3}} e^{4\Phi_{t}}d\left\langle M^i,N^i\right\rangle_t
      - \sum_{i\neq j\in\{1,2\}} \frac{Y^i_{t}(Y^j_{t})^2}{2\pi^i_{t}(\pi^j_{t})^{2}}
      e^{4\Phi_{t}} d\left\langle M^i,N^j\right\rangle_t.
    \end{split}
  \end{equation*}

\bi \textit{Step 2: proof of~\eqref{eq:for-step2-only}.}\\
  Let $C_F>0$ be a positive constant that will be fixed later in the
  proof and define the function $F:\mathds{R}\mapsto \mathds{R}$ by
  \begin{equation*}
    F(r)=\int_0^r \exp\left( C_F e^{-s} \right)ds.
  \end{equation*} 
  Setting $D_F=C_F y_{\infty}\sqrt{2}/\sqrt{c_{\pi}}$, one can easily check that
  \begin{equation*}
    1\leq F'(r)\leq e^{D_F}\text{ and }F''(r)=-C_Fe^{-r}F'(r),\ \forall r\geq -\frac{1}{2}\log\left(2\frac{y_{\infty}^2}{c_{\pi}}\right),
  \end{equation*}
  where $-\frac{1}{2}\log\left(2\frac{y_{\infty}^2}{c_{\pi}}\right)$ is a lower bound for the process $(\Phi_t)$.
  By It\^o's formula, we deduce that, for any $t\in [0,T_{\epsilon}[$,
  \begin{equation}
    \label{EqPrHiDecompF}
    F(\Phi_t)-F(\Phi_0)
    =\int_0^t F'(\Phi_{s})d\Phi^c_s
    -\frac{C_F}{2}\int_0^t e^{-\Phi_{s}}F'(\Phi_{s})d\left\langle \Phi\right\rangle_s
    +\sum_{0\leq s\leq t} F(\Phi_s)-F(\Phi_{s\minus}),
  \end{equation}
  where $d\Phi^c_s$ is the continuous part of $d\Phi_s$.
  Our aim is to prove that
  \begin{equation*}
    F(\Phi_t)-F(\Phi_0)\leq H_t 
    + C\left(|L^1|_{t}+|L^2|_{t}+t\right),
  \end{equation*}
  where $H$ is the local martingale defined for any $t>0$ by
      \begin{equation*}
    H_t=-\sum_{i=1,2} \int_0^{t\wedge T_{\epsilon}} \frac{Y^i_{s}}{\pi^i_{s}}  e^{2\Phi_{s}} F'(\Phi_{s})dM^i_s
      +\sum_{i=1,2}\int_0^{t\wedge T_{\epsilon}}\frac{(Y^i_{s})^2}{2(\pi^i_{s})^{2}} e^{2\Phi_{s}}  F'(\Phi_{s}) d{N^i_s}.
  \end{equation*}
  This is done below using~\eqref{EqPrHittingDphi} and proving lower or higher bounds for each term on the right hand side of~\eqref{EqPrHiDecompF}, respectively in step 2a, in step 2b and in step 2c.

\bi \textit{Step 2a.} Let us prove that there exists
a positive constant $C'>0$, which does not depend on $C_F$, such that
  \begin{equation}
    \label{EqPrHiBorne1}
    \int_0^t F'(\Phi_{s})d\Phi^c_s
    \leq H_t+ C' \int_0^t e^{\Phi_s}F'(\Phi_s) ds
    +\frac{e^{D_F}}{2c_{\pi}}\left(|L^1|_t+|L^2|_t\right)
    +\frac{3e^{D_F}C_{\xi}t}{2c_{\pi}^2}.
  \end{equation}
Since $K^i$ is non-decreasing, we have
  \begin{equation*}
    -\sum_{i=1,2} \int_0^t \frac{Y^i_{s}}{\pi^i_{s}} e^{2\Phi_{s}}F'(\Phi_{s}) dK^i_s\leq 0.
  \end{equation*}
  One can easily check that $ Y^i_t
  e^{\Phi_t}\leq \sqrt{\pi^i_t}$, then
$    \frac{Y^i_t}{\pi^i_t}e^{\Phi_t}\leq \frac{1}{\sqrt{c_{\pi}}}.$
  Since $b^i_t\geq -b_{\infty}$, we have 
  \begin{equation*}
    - \sum_{i=1,2} \int_0^t \frac{Y^i_{s}}{\pi^i_{s}} F'(\Phi_{s}) e^{2\Phi_{s}} b^i_s ds
    \leq \frac{2 b_{\infty}}{\sqrt{c_{\pi}}}\int_0^t  e^{\Phi_s} F'(\Phi_s)ds.
  \end{equation*}
  Since the derivative $F'$ takes its values in $[1,e^{D_F}]$ and since 
  $\frac{(Y^i_t)^2}{(\pi^i_t)^2}e^{2\Phi_t}\leq \frac{1}{c_{\pi}}$, we deduce that
  \begin{equation*}
     \sum_{i=1,2} \int_0^t \frac{(Y^i_{s})^2}{2(\pi^i_{s})^{2}}  e^{2\Phi_{s}} F'(\Phi_{s}) dL^i_s
     \leq \frac{e^{C_F}}{2c_{\pi}}\left(|L^1|_t+|L^2|_t\right).
  \end{equation*}
  By the second point of Hypothesis~\ref{HyPrHitting}, we have
$    |\rho^i_t|e^{\Phi_t}\leq k_0 Y^i_t e^{\Phi_t} \leq k_0 \sqrt{\pi^i_t} \leq k_0 \sqrt{C_{\pi}}$. As a consequence, using that $\frac{(Y^i_t)^2}{(\pi^i_t)^2}e^{2\Phi_t}\leq \frac{1}{c_{\pi}}$ and $\pi^i_t\geq c_{\pi}$, we deduce that
  \begin{equation*}
      \frac{1}{2}\sum_{i=1,2}\int_0^t\left(-\frac{1}{\pi^i_{s}}e^{2\Phi_{s}}
        + 2 \frac{(Y^i_{s})^2}{(\pi^i_{s})^{2}}
        e^{4\Phi_{s}}\right) \rho^i_{s} F'(\Phi_s) ds
      \leq \frac{3k_0\sqrt{C_{\pi}}}{c_{\pi}}\int_0^t e^{\Phi_s}  F'(\Phi_s)ds.
  \end{equation*}
  By the Kunita-Watanabe inequality (see \cite[Corollary 1.16 of
  Chapter IV]{Revuz1999}) and by Hypothesis~\ref{HyPrHitting} (first and second points), we get, for all predictable process $h_s$ and any couple $i,j\in\{1,2\}$,
  \begin{equation*}
    \left|\int_0^t h_s 
      \left\langle  M^i,N^j \right\rangle_s\right|
    \leq \sqrt{\int_0^t |h_s|
      \left\langle  M^i \right\rangle_s}\sqrt{\int_0^t |h_s|
      \left\langle  N^j \right\rangle_s}
    \leq \sqrt{ C_{\pi} C_{\xi}}\int_0^t |h_s| ds,
  \end{equation*}
and, similarly,
\begin{equation*}
    \left|\int_0^t h_s 
      \left\langle  M^i,M^j \right\rangle_s\right|
    \leq  C_{\pi}\int_0^t |h_s| ds\text{ and }
        \left|\int_0^t h_s 
      \left\langle  N^i,N^j \right\rangle_s\right|
    \leq  C_{\xi}\int_0^t |h_s| ds,
\end{equation*}
In particular, for any $t\in[0,T_{\epsilon}[$, using $\frac{(Y^i_s)^2}{(\pi^i_s)^2}e^{2\Phi_s}\leq \frac{1}{c_{\pi}}$ and $\pi^i_s\geq c_{\pi}$, we deduce that
  \begin{equation*}
    \frac{1}{2}\sum_{i=1,2}\int_0^t \left(-\frac{(Y^i_{s})^2}{(\pi^i_{s})^{3}} e^{2\Phi_{s}}
      +\frac{(Y^i_{s})^4}{2(\pi^i_{s})^{4}} 
      e^{4\Phi_{s}}\right)F'(\Phi_s)d\left\langle N^i\right\rangle_s
    \leq \frac{e^{D_F}C_{\xi}t}{c_{\pi}^2}
  \end{equation*}
  and
  \begin{equation*}
  \frac{1}{2}\sum_{i\neq
        j\in\{1,2\}}\int_0^t \frac{(Y^i_s)^2(Y^j_s)^2}{2(\pi^i_s)^2(\pi^j_s)^2}e^{4\Phi_s}F'(\Phi_s)d\langle N^i,N^j\rangle_s
        \leq \frac{e^{D_F}C_{\xi}t}{2c_{\pi}^2}.
  \end{equation*}
  We also deduce that
  \begin{equation*}
    \frac{1}{2}\sum_{i=1,2}\int_0^t \left(\frac{Y^i_{s}}{(\pi^i_{s})^2} e^{2\Phi_{s}}
      -\frac{(Y^i_{s})^3}{(\pi^i_{s})^{3}} e^{4\Phi_{s}}\right)
    F'(\Phi_s)d\left\langle M^i,N^i\right\rangle_s
    \leq \frac{2\sqrt{C_{\pi}C_{\xi}}}{c_{\pi}^{3/2}}\int_0^t e^{\Phi_s}F'(\Phi_s)ds
  \end{equation*}
  and 
  \begin{equation*}
    -\frac{1}{2}\sum_{i\neq
        j\in\{1,2\}}\int_0^t \frac{Y^i_{s}(Y^j_{s})^2}{\pi^i_{s}(\pi^j_{s})^2}
      e^{4\Phi_{s}}F'(\Phi_s)d\left\langle
        M^i,N^j\right\rangle_s
      \leq \frac{\sqrt{C_{\pi}C_{\xi}}}{c_{\pi}^{3/2}}\int_0^t e^{\Phi_s} F'(\Phi_s) ds.
  \end{equation*}
  Setting $C'= \frac{2 b_{\infty}}{\sqrt{c_{\pi}}} +
      \frac{3k_0\sqrt{C_{\pi}}}{c_{\pi}} + \frac{3\sqrt{
          C_{\pi} C_{\xi}}}{c_{\pi}^{3/2}}>0$
          and using equation~\eqref{EqPrHittingDphi}
(recall that we assumed $\langle M^1,M^2 \rangle=0$),
           we deduce that the higher bound~\eqref{EqPrHiBorne1} 
holds almost surely, for any $t\in[0,T_{\epsilon}[$.

\bi \textit{Step 2b.}  We prove now the following lower bound for $\int_0^t
  e^{-\Phi_{s}}F'(\Phi_{s})d\left\langle \Phi\right\rangle_s$,
    \begin{equation}
    \label{EqPrHiBorne2}
    \int_0^t e^{-\Phi_{s}}F'(\Phi_{s})d\left\langle \Phi\right\rangle_s
    \geq \frac{c_{\pi}}{C_{\pi}} \int_0^t e^{\Phi_s} F'(\Phi_s)ds 
    -\left(\frac{C_{\xi}}{2c_{\pi}^2}
   +\frac{k_0\sqrt{C_{\pi}}}{c_{\pi}}+\frac{2\sqrt{C_{\pi}C_{\xi}}}{c_{\pi}^{3/2}}\right)e^{D_F}t.
  \end{equation}
  One can easily check (using the second point of Hypothesis~\ref{HyPrHitting}) that
  \begin{equation*}
    \frac{e^{2\Phi_s}}{C_{\pi}}\leq
    \sum_{i=1,2} \frac{(Y^i_{s})^2}{(\pi^i_{s})^2}e^{4\Phi_{s}}
    \leq \frac{e^{2\Phi_s}}{c_{\pi}}
    \text{ and }
    \rho^i_s\geq -k_0 Y^i_s \geq -k_0 \sqrt{C_{\pi}}e^{-\Phi_s}.
  \end{equation*}
Using that $\pi^i_s\geq c_\pi$, we deduce that
  \begin{equation*}
    \sum_{i=1,2}\int_0^t   \frac{(Y^i_{s})^2}{(\pi^i_{s})^2} e^{4\Phi_{s}}(\pi^i_{s}+\rho^i_{s})
    e^{-\Phi_s}F'(\Phi_s)ds
    \geq \frac{c_{\pi}}{C_{\pi}} \int_0^t e^{\Phi_s} F'(\Phi_s)ds -\frac{k_0\sqrt{C_{\pi}}}{c_{\pi}} e^{D_F} t.
  \end{equation*}
  The process $\langle N^i\rangle$ being non-decreasing and $F'$ being positive, we have
  \begin{equation*}
    \sum_{i=1,2}\int_0^t \frac{(Y^i_{s})^4}{4(\pi^i_{s})^{4}}
    e^{4\Phi_{s}} F'(\Phi_s)e^{-\Phi_s}d\left\langle N^i\right\rangle_s\geq 0.
  \end{equation*}
  The same application of the Kunita-Watanabe inequality as above leads us to
   \begin{equation*}
   \begin{split} 
   \sum_{i\neq j\in\{1,2\}} \int_0^t \frac{(Y^i_s)^2(Y^j_s)^2}{4(\pi^i_s)^2(\pi^j_s)^2}e^{4\Phi_s}F'(\Phi_s)e^{-\Phi_s}d\langle N^i,N^j\rangle_s&\geq - \frac{C_{\xi}e^{D_F}t}{2c_{\pi}^2}\int_0^t e^{-\Phi_s} ds\\
   & \geq - \frac{C_{\xi}}{2c_{\pi}^2}e^{D_F}t,
   \end{split}
  \end{equation*}
  since the definition of $\Phi$ implies that it is uniformly bounded below by $0$.
We also have  
  \begin{equation*}
  -\sum_{i=1,2}\int_0^t \frac{(Y^i_{s})^3}{2(\pi^i_{s})^{3}} e^{4\Phi_{s}}
  F'(\Phi_s)e^{-\Phi_s}d\left\langle M^i,N^i\right\rangle_s
  \geq -\frac{\sqrt{C_{\pi}C_{\xi}}}{c_{\pi}^{3/2}} e^{D_F} t
  \end{equation*}
  and
  \begin{equation*}
    -\sum_{i\neq j\in\{1,2\}} \int_{0}^t \frac{Y^i_{s}(Y^j_{s})^2}
    {2\pi^i_{s}(\pi^j_{s})^{2}} e^{4\Phi_{s}} F'(\Phi_s)e^{-\Phi_s}d\left\langle M^i,N^j\right\rangle_s
    \geq - \frac{\sqrt{C_{\pi}C_{\xi}}}{c_{\pi}^{3/2}} e^{D_F}t.
  \end{equation*}
  We finally deduce by~\eqref{EqPrHittingDphi} (where $\langle M^1,M^2\rangle=0$) that~\eqref{EqPrHiBorne2} holds.

\bi \textit{Step 2c.} The jumps of $\Phi_t$ are non-positive and $F$ is increasing, thus
  \begin{equation}
    \label{EqPrHiBorne3}
     \sum_{0\leq s\leq t} F(\Phi_s)-F(\Phi_{s\minus})\leq 0.
  \end{equation}
  Finally, by \eqref{EqPrHiBorne1}, \eqref{EqPrHiBorne2} and  \eqref{EqPrHiBorne3}, we deduce
  from \eqref{EqPrHiDecompF} that
  \begin{multline*}
    F(\Phi_t)-F(\Phi_0)\leq H_t+ \left(C'-\frac{C_F c_{\pi}}{2C_{\pi}}\right)
    \int_0^t e^{\Phi_s}F'(\Phi_s) ds
    +\frac{e^{D_F}}{2c_{\pi}}\left(|L^1|_t+|L^2|_t\right)\\
    +\left(\frac{(3+C_F)C_{\xi}}{2c_{\pi}^2}
      +\frac{k_0 C_F \sqrt{C_{\pi}}}{2c_{\pi}}
      +\frac{C_F\sqrt{C_{\pi}C_{\xi}}}{c_{\pi}^{3/2}}\right)
    e^{D_F}t.
  \end{multline*}
  Choosing $C_F= 2C_{\pi}C'/c_{\pi}$, we have proved that there exists $C>0$ such that, for any $t\in[0,T_{\epsilon}[$,
  \begin{equation*}
    F(\Phi_t)-F(\Phi_0)\leq H_t 
    + C\left(|L^1|_{t}+|L^2|_{t}+t\right).
  \end{equation*}

\bi \textit{Step 3: conclusion in the case  $\left\langle
    M^1,M^2\right\rangle=0$.}\\
       Let
  $(\theta'_n)_{n\in\mathds{N}}$ be an increasing sequence of stopping times which converge to $+\infty$ such
  that $(H_t)_{t\in[0,\theta'_n]}$ is a martingale for any $n\geq 0$. Let $(\theta''_n)_{n\geq 0}$ be the non-decreasing sequence of stopping times defined by $\theta''_n=\inf\{t\in[0,T],\
  \exists i \text{ st }\int_0^{t} d|L^i|_s\geq n\}\wedge T$; since $L^i$ is a finite variational process, $(\theta''_n)$ converges to $+\infty$ when $n$ goes to $\infty$. 
  Using Step 2, we deduce that
  \begin{equation}
    \label{EqPrHittingAssumed}
    E\left(\int_{\Phi_0}^{\Phi_{ T_{\epsilon}\wedge
          \theta'_{n'}\wedge\theta''_{n''}}} \exp(e^{C_F}e^{-u}) du\right)
    \leq C\,E(|L^1|_{\theta''_{n''}}+|L^2|_{\theta''_{n''}} + T).
  \end{equation}
   Remark that
  $\Phi_{t\wedge T_{\epsilon}\wedge \theta'_{n'}\wedge\theta''_{n''}}$
  reaches $\epsilon^{-1}$ if and only if $T_{\epsilon}\leq
  \theta'_{n'}\wedge\theta''_{n''}$. Using the right continuity of
  $Y^1$, $Y^2$, $\pi^1$ and $\pi^2$, we thus deduce that
  \begin{eqnarray*}
    P\left(T_{\epsilon}\leq \theta'_{n'}\wedge\theta''_{n''}
    \right)&=&P\left(\Phi_{ T_{\epsilon}\wedge
        \theta'_{n'}\wedge\theta''_{n''}}-\Phi_0\geq\epsilon^{-1}-\Phi_0\right)\\
    &\leq&P\left(\int_{\Phi_0}^{\Phi_{ T_{\epsilon}\wedge
          \theta'_{n'}\wedge\theta''_{n''}}} \exp(e^{C_F}e^{-u}) du\geq
      \epsilon^{-1}-\Phi_0\right),
  \end{eqnarray*}
  since $r-q\leq \int_q^r\exp(e^{C_F}e^{-u}) du$ for all $0\leq q\leq
  r$. Finally, using the Markov inequality and
  \eqref{EqPrHittingAssumed}, we get, for all $\epsilon^{-1}> \Phi_0$,
  \begin{equation*}
    P\left(T_{\epsilon}\leq \theta'_{n'}\wedge\theta''_{n''} \right)\leq
    \frac{1}{\epsilon^{-1}-\Phi_0} C\, E\left(|L^1|_{\theta''_{n''}}+|L^2|_{\theta''_{n''}} + T\right).
  \end{equation*}
 Since
  $(\theta'_{n'})$ and $(\theta''_{n''})$ converge to $+\infty$ almost surely,
  letting $n'$ and $n''$ go to $\infty$ implies the last part of Proposition~\ref{PrHitting}, which implies the whole proposition.

\bi \textit{Step 4: conclusion of the proof in the general case.}\\
  Assume now that $\left\langle M^1,M^2\right\rangle$ is non-increasing. We define $\Phi'_t$ as the
  process starting from $\Phi_0$ and whose increments are defined by the
  right term of \eqref{EqPrHittingDphi} but removing the $d\langle M^i,M^j\rangle$ terms. We also define
	$T'_{\epsilon}=\inf\{t\geq 0,\ \Phi'_t\geq \epsilon^{-1}\}$.  
  On the one hand, the same
  calculation as above leads to
  \begin{equation}
    \label{EqPrHittingFphiPrim}
    P(T'_{\epsilon}\leq T)\leq  \frac{1}{\epsilon^{-1}-\Phi_0} C\, E\left(|L^1|_{T}+|L^2|_{T} + T\right).
  \end{equation}
  On the other hand,
  \begin{equation*}
    d\Phi_t=d\Phi'_t 
    + \frac{1}{2}\sum_{i\neq j\in\{1,2\}} \frac{2 Y^i_t Y^j_t}{\pi^i_t \pi^j_t}e^{4\Phi_t} d\langle M^i,M^j\rangle_t.
  \end{equation*}
 Since $\left\langle M^1,M^2\right\rangle$ is assumed to be non-increasing (third point of Hypothesis~\ref{HyPrHitting}), we deduce that $\Phi_t\leq \Phi'_t$. It yields that $T'_{\epsilon}\leq T_{\epsilon}$ almost surely, so that Proposition~\ref{PrHitting} holds even if $\left\langle M^1,M^2\right\rangle\neq 0$.
  
  \end{proof}

\paragraph{Acknowledgements.} This work has been mostly written during my PhD thesis and I am extremely grateful to my PhD advisor Sylvie M\'el\'eard for her substantial help. This work  benefited from the support of the ANR MANEGE (ANR-09-
BLAN-0215), from the Chair "Mod\'elisation Math\'ematique et Biodiversit\'e" of Veolia Environnement-\'Ecole Polytechnique-Museum National d'Histoire Naturelle-Fondation X and from the TOSCA team (INRIA Grand-Est Nancy, France).


\end{document}